\documentclass[onefignum,onetabnum]{siamart190516}

\usepackage{lipsum}
\usepackage{amsfonts}
\usepackage{graphicx}
\usepackage{epstopdf}
\usepackage{algorithmic}
\usepackage{amsmath}
\usepackage{amsfonts}
\usepackage{multirow}
\usepackage{multirow}
\usepackage{rotating}
\usepackage{float}
\usepackage{subfigure}
\usepackage{hyperref}
\usepackage{amssymb, graphicx}%
\usepackage[applemac]{inputenc}
\usepackage{fancybox, color}

\def\bigmset{{\,\big\vert\,}}
\def\gph{\mathop{\rm gph}}
\def\epi{\mathop{\rm epi}}

\def\inte{\mathop{\rm Int }}
\def\dom{\mathop{\rm Dom}}
\def\subreg {\mathop{\rm subreg\,}}
\def\reg {\mathop{\rm reg\,}}
\def\cone{\mathop{\rm cone\,}}
\def\cl{\mathop{\rm cl\,}}
\def\N{{\mathbb{N}}}
\def\T{{\mathbb{T}}}
\def\R{{\mathbb{R}}}\def\C{{\mathcal {C}}}
\def\C{{\mathcal {C}}}

\ifpdf
  \DeclareGraphicsExtensions{.eps,.pdf,.png,.jpg}
\else
  \DeclareGraphicsExtensions{.eps}
\fi

\newsiamremark{remark}{Remark}
\newsiamremark{hypothesis}{Hypothesis}
\crefname{hypothesis}{Hypothesis}{Hypotheses}
\newsiamthm{claim}{Claim}

\headers{ Finding directional stationary points of DC programs}{H. A. Le Thi, V. N. Huynh, T. Pham Dinh }

\title{A unified approach for finding directional stationary points of DC programs}

\author{Hoai An Le Thi\thanks{Universit\'e de Lorraine, LCOMS, F-57000 Metz, France, and
Institut Universitaire de France (IUF)
  (\email{hoai-an.le-thi@univ-lorraine.fr}).}
\and Van Ngai Huynh\thanks{University of Quy Nhon, Viet Nam 
  (\email{ngaivn@yahoo.com}).}
\and Tao Pham Dinh\thanks{Laboratory of Mathematics, INSA-Rouen, University of Normandie, 76801 Saint-\'Etienne-du-Rouvray Cedex, France (\email{pham-dinh.tao@insa-rouen.fr}).}
}

\usepackage{amsopn}

\hypersetup{final}

\begin{document}

\date{}
\maketitle

\newtheorem{solution}[theorem]{Solution}
\newtheorem{summary}[theorem]{Summary}
\def\bigmset{{\,\big\vert\,}}
\def\gph{\mathop{\rm gph}}
\def\epi{\mathop{\rm epi}}
\def\inte{\mathop{\rm int }}
\def\dom{\mathop{\rm dom}}
\def\subreg {\mathop{\rm subreg\,}}
\def\reg {\mathop{\rm reg\,}}
\def\cone{\mathop{\rm cone\,}}
\def\cl{\mathop{\rm cl\,}}
\def\N{{\mathbb{N}}}
\def\T{{\mathbb{T}}}
\def\R{{\mathbb{R}}}\def\C{{\mathcal {C}}}
\def\C{{\mathcal {C}}}

\raggedbottom

\begin{abstract}
We address the problem of computing stationary points for non-smooth, non-convex optimization problems. 
While  this topic is well studied in the smooth setting, fewer algorithmic and theoretical results exist for the non-smooth case. Within Difference-of-Convex functions (DC) programming, the well-known DC Algorithm (DCA) is a standard method for computing critical points, whose definition 
depends on the chosen DC decomposition. More recently, some works have focused on computing directional stationary points - a stronger notion that does not depend on
any particular DC decomposition - for specific non-smooth DC programs, where the second DC component is the pointwise maximum of finitely many smooth convex functions.

In this contribution, we propose a new and unified approach for identifying directional stationary points of non-smooth DC programs, where both DC components
may be non-smooth. Our framework generalizes both the 
classical DCA and the above recent approaches. It applies 
when the second DC component is the pointwise maximum of a
continuous family of smooth convex functions, and more 
generally, to any continuous convex function on the whole space.
We also establish strong convergence results. Specifically, when the second DC component is the pointwise maximum of 
finitely many $C^{1,1}$ - smooth convex functions, we prove 
under the \L ojasiewicz inequality that the entire sequence
generated by our algorithm converges. This extends previous
results requiring global $C^{1,1}$ - smoothness. 
Finally, we introduce a randomized (stochastic) variant of our method and prove its almost sure convergence, thereby extending our deterministic results to the randomized setting.

\vskip 0.1cm
\textbf{Mathematics Subject Classification}: 49J52, 90C26, 90C30.
\vskip 0.1cm
\textbf{Key words:}
DC program, DCA, critical point, directional derivative, directional stationary point, subdifferential.
\end{abstract}

\section{Introduction} 
 
The computation of stationary points for nonsmooth and nonconvex optimization problems arises in a wide range of applications in engineering, economics, machine learning, signal processing, and data science. In contrast to smooth optimization, where a rich theory and a large variety of efficient algorithms are available, the nonsmooth nonconvex setting remains particularly challenging due to the lack of differentiability and the intricate geometry of the objective landscape. These difficulties have motivated the development of suitable stationarity concepts together with dedicated algorithmic frameworks capable of effectively handling nonsmoothness and nonconvexity.

Among such frameworks, Difference of Convex functions (DC) programming and the DC Algorithm (DCA) have established themselves as a fundamental and unifying paradigm for modeling and solving nonsmooth nonconvex optimization problems (see, e.g. \cite{ATao05,LTP18,LTP23,LTP25,ANT-DCgeneral,PhLe1,PhLe2,TArecentad} and the references therein).
The standard DC optimization problem takes the form:

$$
(P) \quad \min \{ f(x) := g(x) - h(x) : x \in \mathbb{R}^n \},
$$
where $g, h: \mathbb{R}^n \to \mathbb{R} \cup \{+\infty\}$ are proper, lower semicontinuous convex functions, called DC components of $f$. DC programs involving a closed convex constraint set, namely
$$
\min \{ f(x) := g(x) - h(x) : x \in C \},
$$
with $C \subseteq \mathbb{R}^n$ closed and convex, can be rewritten as a particular  instance of $(P)$ by replacing $g$ with $g + \chi_C$, where $\chi_C$ denotes the indicator function of $C$.

Initiated in 1985 by Pham Dinh Tao \cite{Pham85}, building upon her early work on convex maximization dating back to 1974 (see the historical overview   in \cite{LTP25} and the references therein), DC programming and  DCA have undergone more than four decades of continuous development. From an initially promising theoretical framework, they have progressively evolved into a mature, powerful, and versatile methodology integrated into optimization solvers, software platforms, and real-world decision systems. Owing to their simplicity, flexibility, numerical efficiency, and scalability, DC programming and DCA have successfully addressed a broad spectrum of large-scale nonsmooth and nonconvex problems arising in engineering, machine learning, signal processing, statistics, communication networks, finance, and data science (see, e.g., \cite{ATao05,LTP18,LTP23,LTP25,ANT-DCgeneral,PhLe1,PhLe2,TArecentad} and the references therein).

Despite this remarkable success of DCA, the characterization and computation of refined stationarity notions for nonsmooth nonconvex DC optimization problems remain highly challenging.

As is well known, DCA aims to compute a \textit{critical} point of problem (P), that is, a point $x^* \in \mathbb{R}^n$ satisfying 
$$\partial g(x^*) \cap \partial h(x^*) \neq \emptyset.$$  
However, criticality generally depends on the specific DC decomposition of the objective function and is therefore not an intrinsic property of the model, since a given DC function admits infinitely many decompositions. By contrast, because every DC function is directionally differentiable in all directions, \textit{directional stationarity} emerges as a natural first-order optimality concept. It is stronger than criticality and, importantly, independent of the chosen DC decomposition.
This motivates the study of directional stationarity conditions and their computation.

A point $x^*$ is called directionally stationary if the directional derivative of $f$ at $x^*$ is nonnegative in every feasible direction (or Bouligand tangent direction in constrained settings). When $h$ is differentiable at a critical point, that point is also directionally stationary. However, in general, the two concepts differ, and a critical point of DCA may fail to satisfy the directional stationarity condition of the objective function.

Due to the difficulty of verifying directional stationarity - especially in nonsmooth and nonconvex problems such as $(P)$, where infinitely many directions must be examined - there exist relatively few algorithmic approaches in the literature for computing directionally stationary points. For instance, Beck and Hallak \cite{Beck-SIOPT} studied a class of problems in which the objective function is expressed as the difference between a smooth function and a convex one under linear constraints. More recently, Pang et al. \cite{PRA} proposed a method for computing a directionally stationary point of $(P)$ when $h$ is the 
pointwise maximum of finitely many differentiable convex functions. This approach was subsequently extended in \cite{CPS} to address statistical estimation problems, and further developed in \cite{Pang-SIOPT2018} for computing directionally stationary solutions of a class of multiblock nonsmooth optimization problems arising from generalized noncooperative potential games.
Following the works of Pang et al., the authors in \cite{LuZhouSun19} proposed several enhanced proximal DC algorithms for finding directional stationary points of DC programs in which the second DC component is defined as a possibly infinite supremum, over a specially structured compact set in a finite-dimensional Euclidean space, of a family of differentiable convex functions. In the case where the second DC component is a finite supremum, nonmonotone line-search schemes and randomized update strategies were also presented in \cite{LuZhou19}.

\vskip 0.2cm
\textbf{Our contributions.}   
In this work, we propose a new and unified algorithmic framework for computing directionally stationary points of  DC programs, extending the capabilities of DCA beyond criticality and without requiring specific structural assumptions on the DC components. Our main contributions are as follows:

i) \textit{A general algorithm for computing directional stationary points of DC programs:}
\\We design a new algorithm that integrates the notion of directional stationarity directly into the DCA framework. Unlike standard DCA, which converges to critical points, our method ensures convergence to directionally stationary (d-stationary) points. The algorithm operates without imposing structural assumptions on the DC components $g$ and $h$.
  We examine in detail several scenarios,  
  including cases where $h$ is assumed to be continuous or,
 more specifically, a convex function representing the pointwise maximum of a (possibly infinite) family of continuously differentiable convex functions. We provide insights
 into how the proposed algorithm operates in these situations. 

ii) \textit{Unification, generalization related   approaches:}
\\
Our algorithm recovers, as special cases, related works.
More precisely
\begin{description}
 \item[$\bullet$] When $h$ is differentiable this novel algorithm reduces to standard DCA. 
\item[$\bullet$] When $h$ is the pointwise maximum of a finite set of 
convex differentiable functions, our method, with a specialized update, encompasses   the algorithm of Pang et al. \cite{PRA}.
However, unlike \cite{PRA}, the convergence of our algorithm 
is established without requiring regularized quadratic terms. Moreover, our approach solves fewer convex subproblems per iteration, significantly reducing computational overhead.
\end{description}

iii)  \textit{Theoretical convergence analysis:}
\\
We provide a rigorous convergence analysis of the proposed algorithm, establishing that every accumulation point is directionally stationary under standard assumptions.
We also show that when the second component of the DC decomposition is the pointwise maximum of a finite family of $C^{1,1}$-smooth convex functions, the entire sequence generated by the algorithm converges under the \L ojasiewicz inequality. This result significantly extends the convergence analysis in \cite{ANT-JOTA18}, which required the second DC component $h$ to be globally $C^{1,1}$.

iv) \textit{Toward practical scalability - a stochastic variant for large-scale settings:}
\\
To address the potentially high computational burden associated with solving many convex subproblems per iteration, we also develop a randomized (stochastic) variant of our algorithm. By sampling components of the second DC term, this variant significantly reduces the practical complexity and scales efficiently to large-scale problems, particularly when the second DC component involves the maximum over many convex functions. Importantly, it is supported by its own rigorous analysis: we prove almost sure convergence to a directional stationary point, thereby extending our deterministic convergence results to the randomized setting.

\vskip 0.2cm
\textbf{Organization of the paper.} The remainder of this paper is organized as follows.
In Section 2, we provide essential background on Variational analysis and stationarity concepts in DC Programming, with a particular focus on directional stationarity as the targeted optimality concept.
Section 3 presents a generic algorithm for computing directional stationary points of  DC programs and discusses how it can be tailored to different scenarios.
In Section 4, we propose a randomized variant of the algorithm, designed to address large-scale problems efficiently.
Finally, Section 5 offers concluding remarks and outlines directions for future research. 

 \section{Background on Variational Analysis and Stationarity Concepts in DC Programming}
 
 In the sequel, the space $\R^n$ is equipped with the canonical inner product $\langle \cdot\rangle.$ 
 Its dual space is identified with $\R^n$ itself. 
 $\mathcal{S}(\R^n)$ denotes the set of lower semicontinuous 
 functions $\varphi:\R^n\to \R\cup\{+\infty\}.$ 
 The open and closed balls with the center $ x\in \R^n$ and radius $\varepsilon >0$ is denoted, respectively, by $B(x,\varepsilon)$ and
 $B[x,\varepsilon],$ while the unit ball (i.e., the ball 
 with the center at the origin and unit radius) is denoted by 
 $B.$ We recall some backgrounds on convex analysis and 
 nonsmooth analysis (\cite{Borisbook1}, \cite{Roc}, \cite{R-W}). \\
 A function $\varphi:\R^n\to\R\cup\{+\infty\}$ is called 
 strongly $\rho-$convex for some $\rho(\varphi)\ge 0,$  if 
 for all $x,y\in\R^n,$ $\lambda\in[0,1]$ one has
$$\varphi(\lambda x+(1-\lambda)y)\le \lambda \varphi(x)+(1-\lambda) \varphi(y)-\frac{\rho}{2}\lambda(1-\lambda)\|x-y\|^2.$$
The supremum of all $\rho\ge 0$ such that the above 
inequality is verified is called the convex modulus of $f,$ 
which is denoted by $\rho(\varphi).$\\
The (Fenchel) subdifferential of a convex function 
$\varphi\in\mathcal{S}(\R^n)$ at $x\in\dom f$ is defined by
$$\partial \varphi(x)= \{x^*\in \R^n:\quad\langle x^*,y-x\rangle\le \varphi(y)-\varphi(x)\quad\forall y\in\R^n\}.$$
We set $\partial \varphi(x)=\emptyset$ if $x\notin\dom \varphi.$
The Fenchel subdifferential of a convex function 
$\varphi$ is also defined via the \textit{directional derivative} by
$$\partial \varphi(x)= \{x^*\in \R^n:\quad\langle x^*,d\rangle\le \varphi^\prime(x,d)\quad\forall d\in\R^n\},$$
where the directional derivative $\varphi^\prime(x,d)$ is defined by
$$\varphi^\prime(x,d):=\lim_{t\to0^+}\frac{\varphi(x+td)-\varphi(x)}{t}=\inf_{t>0}\frac{\varphi(x+td)-\varphi(x)}{t}.$$

\vskip 0.2cm
Let $f:\R^n\rightarrow\R\cup\{+\infty\}$ be a lower semicontinuous real extended valued function.  The \textit{Fr\'echet subdifferential} of $f$ at $x\in\dom f$ is defined by
$$\partial^Ff(x)=\left\{x^*\in\R^n:\;\; \liminf_{h\to 0}\frac{f(x+h)-f(x)-\langle x^*,h\rangle}{\|h\|}\geq 0\right\}.$$
For $x\notin\dom f,$ we set $\partial^F f(x)=\emptyset.$ 
\vskip 0.2cm
Recall also that the Clarke-Rockafellar subdifferential of a lower semicontinuous function $f:\R^n\to\R\cup\{+\infty\}$ at $x$ is defined by
$$\partial^\uparrow f(x)=\{x^*\in\R^n:\;\; \langle x^*,d\rangle\le f^\uparrow(x,d)\;\forall d\in\R^n\}.$$
Here, $ f^\uparrow(x,d)$ is the Clarke-Rockafellar generalized directional derivative defined by
$$f^\uparrow(x,d)=\sup_{\varepsilon>0}\limsup_{y\to_f x,t\to 0^+}\inf_{v\in B(d,\varepsilon)}\frac{f(y+tv)-f(y)}{y},$$
where $y\to_f x$ means $(y,f(y))\to (x,f(x)).$
When $f$ is locally Lipschitz around $x,$ $f^\uparrow(x,d)$ coincides with the Clarke generalized derivative:
$$  f^\uparrow(x,d)=f^\circ(x,d)=\limsup_{z\to x,t\downarrow 0}\frac{f(z+td)-f(z)}{t}.$$
In this case, the Clarke-Rockafellar subdifferential (at $x$) agrees with the Clarke subdifferential, denoted by $\partial^\circ f(x).$ Morerover, when $f$ is convex,
$\partial^\circ f(x)$  coincides with the subdifferential in convex analysis.
\vskip 0.2cm
The concepts of \textit{criticality} regarded as the generalized Fermat rules are defined with respect to some differentials: A point $x\in\R^n$ is called a \textit{Fr\'{e}chet/Clarke-Rockefellar} \textit{critical} point for the function $f,$  if respectively,
\begin{equation}\label{subdiff-critical}
0\in\partial^Ff(x)/\partial^\uparrow f(x).
\end{equation}
\vskip 0.2cm
Consider a DC function $f:\R^n\to\R\cup\{+\infty\},$ $f:=g-h,$ where $g,$ $h$ is convex functions. Then one has 
\begin{equation}\label{Subdiff-DC1}
\partial^Ff(x)\subseteq\partial^\uparrow f(x) \subseteq \partial g(x)-\partial h(x);
\end{equation}
wherever  $h$ is continuous at $x\in\R^n.$ Especially, if $h$ is differentiable at $x,$ with derivative $\nabla h(x),$ then
\begin{equation}\label{Subdiff-DC2}
\partial^Ff(x)=\partial^\uparrow f(x)= \partial g(x)-\nabla h(x).
\end{equation}
In this situation, the criticality concepts based on the subdifferentials mentioned earlier all coincide.
\vskip 0.2cm
\noindent\textbf{DC strong criticality and Directional Stationarity for DC programs}
\vskip 0.2cm
For a DC function $f:\R^n\to\R\cup\{+\infty\},$ with an available DC decomposition $f:=g-h,$  recall that (\cite{ATao05}) a \textit{DC-critical} point of $f$ (with respect to the DC decomposition $g-h$) is a point $x\in\R^n$ such that 
\begin{equation}\label{DC critical}
\partial g(x)\cap \partial h(x)\not=\emptyset,\; \textrm{or equivalently,}\; 0\in \partial g(x)- \partial h(x).
\end{equation}
In the literature of DC programming, there is a clear distinction in the terminology "critical point," which generally refers to the concept of criticality described here. The DC criticality concept, as defined, relies on the specific DC decomposition in use. Based on relation (\ref{Subdiff-DC1}), the concept of DC criticality represents the least stringent form of criticality among those discussed. Additionally, in view of relation (\ref{Subdiff-DC2}), when the second component $h$ of the DC decomposition is differentiable at the given point, all forms of criticality coincide.\\
An intensified version of DC criticality is termed \textit{strong critical point:} A point $x$ is deemed a strong critical point if
\begin{equation}\label{Strong DC critical}
\emptyset\not=\partial h(x)\subseteq \partial g(x).
\end{equation}
As a DC function $f=g-h$ is amenable to directional differentiability, with 
$$f^\prime(x,d)=g^\prime(x,d)-h^\prime(x,d),\;d\in\R^n,$$ 
the concept of directional stationarity is relevant. Specifically, a point $x$ qualifies as a \textit{directional stationary} point of a DC function $f$ if
\begin{equation}
\label{direct stationary}
f^\prime (x,d)\ge 0\quad\mbox{for all}\;\; x\in\R^n.
\end{equation}
Notice that any Fr\'echet critical point of a DC function 
$f$ is necessarily a directional stationary point, although the converse does not generally hold.
For a comprehensive treatment of DC criticality, directional stationarity, and associated optimality conditions in DC programming, we refer the reader to \cite{ATao05, ANT-JOTA18}.
\vskip 0.2cm
When 
$x\in \dom(\partial h),$ the notions of strong criticality, and directional stationarity coincide, as demonstrated in the following proposition.
\begin{proposition}
\label{Coincide critical}  Given a DC function 
$f:\R^n\to\R\cup
\{+\infty\}$ with a DC decomposition $f:=g-h$, where $g$, $h$ represent lower semicontinuous convex functions. Assuming $x\in\dom (\partial h),$ the following properties are equivalent.
\begin{itemize}
\item[(i)] $x$ is a directional stationary point of $f$.
\item[(ii)] $x$ is a strong critical point  of $f$.
\end{itemize} 
In addition, if $x\in \text{int}(\dom h),$ then conditions (i) and (ii) are also equivalent to   $x$ being a Fr\'echet critical point of $f.$ 
 \end{proposition}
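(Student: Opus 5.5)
The plan is to work directly with the two formulas recalled above. The first is the directional-derivative characterization of the Fenchel subdifferential, $\partial\varphi(x)=\{v:\langle v,d\rangle\le\varphi'(x,d)\ \forall d\}$. The second is the decomposition $f'(x,d)=g'(x,d)-h'(x,d)$. Throughout, $x\in\dom(\partial h)$ gives $h(x)$ finite and $\partial h(x)\neq\emptyset$. We also take $x\in\dom g$, which is needed for $f(x)$ to be finite.

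\emph{(i)$\Rightarrow$(ii).} Let $v\in\partial h(x)$. For every $d$ we have $\langle v,d\rangle\le h'(x,d)$, and directional stationarity gives $h'(x,d)\le g'(x,d)$. Hence $\langle v,d\rangle\le g'(x,d)$ for all $d$, and the characterization above yields $v\in\partial g(x)$. So $\emptyset\neq\partial h(x)\subseteq\partial g(x)$. This direction is immediate.

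\emph{(ii)$\Rightarrow$(i).} For any $d$, the inclusion $\partial h(x)\subseteq\partial g(x)$ gives
$$\sigma_{\partial h(x)}(d)\le\sigma_{\partial g(x)}(d)\le g'(x,d),$$
where $\sigma$ denotes the support function and the second inequality follows from the definition of $\partial g(x)$. It remains to compare $\sigma_{\partial h(x)}(d)$ with $h'(x,d)$. Since $\partial h(x)\neq\emptyset$, the sublinear function $h'(x,\cdot)$ is proper, and its lower semicontinuous hull equals $\sigma_{\partial h(x)}$ (Rockafellar, Thm.~23.2 and 23.4).
\begin{itemize}
\item When $x\in\inte(\dom h)$, $h'(x,\cdot)$ is finite and continuous, so $h'(x,d)=\sigma_{\partial h(x)}(d)$ and $f'(x,d)\ge0$ follows at once.
\item In the general case, equality holds for $d$ in the relative interior of the cone of feasible directions of $\dom h$ at $x$.
\end{itemize}

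I expect the main obstacle to be the boundary directions in the general case, where $h'(x,d)$ may exceed its closure or be $+\infty$. There I would use the paper's convention for $f'(x,d)$ when $g'$ or $h'$ is infinite. If $g'(x,d)=+\infty$, the inequality is trivial. Otherwise $d$ lies in the closure of the directions along which both derivatives are finite, and a limiting argument along segments (using convexity of $t\mapsto h(x+td)$) should handle it. If this cannot be made rigorous, I would note that directions leaving $\dom h$ must be excluded or treated by convention.

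\emph{Fréchet criticality when $x\in\inte(\dom h)$.} As noted before the proposition, Fréchet criticality implies directional stationarity, so only the converse needs proof. Convexity of $g$ gives
$$g(x+u)-g(x)\ge g'(x,u)\quad\text{for all } u.$$
Since $h$ is convex and finite near $x$, it is locally Lipschitz there. In finite dimensions this yields the uniform first-order expansion
$$h(x+u)=h(x)+h'(x,u)+o(\|u\|).$$
This is the one point needing a standard compactness argument: the difference quotients converge uniformly on the unit sphere, by Lipschitz equicontinuity together with monotonicity of convex difference quotients in $t$. Combining the two estimates,
$$f(x+u)-f(x)\ge f'(x,u)-o(\|u\|)\ge -o(\|u\|),$$
which says exactly that $0\in\partial^F f(x)$.
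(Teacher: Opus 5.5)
\textbf{The gap you flagged in (ii)$\Rightarrow$(i) at boundary directions is genuine and cannot be closed, because the implication is false there.} No limiting argument along segments will work when $x\in\dom(\partial h)$ lies on the boundary of $\dom h$. On $\R^2$ take
\[
h(x_1,x_2)=\frac{x_1^2}{x_1+\sqrt{x_2}}\ \text{ on } \R^2_+\setminus\{0\},\qquad h(0)=0,\qquad h=+\infty\ \text{ off } \R^2_+,
\]
and $g=\chi_{\R^2_+}$.

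\begin{itemize}
\item $h$ is lower semicontinuous and convex. Indeed, $a^2/b$ is jointly convex and nonincreasing in $b>0$, $x_1+\sqrt{x_2}$ is concave, and $h(\lambda x)\le\lambda h(x)$ for $\lambda\in[0,1]$ handles segments ending at $0$. Also $h\ge 0$ and $\dom g=\dom h$.
\item One computes $h'(0,d)=0$ whenever $d_1\ge 0$, $d_2>0$, but $h'(0,(1,0))=1$.
\item Hence $\partial h(0)=\R^2_-=\partial g(0)$, so $0\in\dom(\partial h)$ is a strong critical point.
\item Yet $f(t,0)=-t$, so $f'(0,(1,0))=-1<0$.
\end{itemize}

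In this direction both $g'$ and $h'$ are finite, and $h'(0,(1,0))=1>0=\sigma_{\partial h(0)}(1,0)$. So $h'(0,\cdot)$ strictly exceeds its closure at a finite boundary direction. Neither a convention for infinite values nor a closure argument can handle this case.

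\textbf{The paper's proof has the same hole.} It extends $l(td)=t\,h'(x,d)$ by Hahn--Banach under the majorant $h'(x,\cdot)$. That extension needs a finite-valued sublinear majorant, or a qualification such as $d\in\mathrm{ri}\,\dom h'(x,\cdot)$. In the example the required $x^\star$ would be an element of $\partial h(0)$ with first coordinate $1$, and no such element exists.

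\textbf{What is actually true.}
\begin{itemize}
\item (i)$\Rightarrow$(ii) holds always, by your argument, which is the paper's.
\item (ii)$\Rightarrow$(i) holds when $x\in\inte(\dom h)$. Your argument via $h'(x,\cdot)=\sigma_{\partial h(x)}$ is complete there.
\item Direction by direction, $f'(x,d)\ge 0$ holds for $d$ in the relative interior of the feasible cone, which your proposal already covers.
\item It also holds for $d$ with $h'(x,d)=+\infty$. There $h(x+td)=+\infty$ for $t>0$, and $\dom g\subseteq\dom h$ then forces $f(x+td)=+\infty$.
\end{itemize}
So the right fix is to state the equivalence (i)$\Leftrightarrow$(ii) under $x\in\inte(\dom h)$, rather than to try to patch the boundary.

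\textbf{Your Fr\'echet part is correct and takes a different route from the paper.}
\begin{itemize}
\item You start from (i). You use $g(x+u)-g(x)\ge g'(x,u)$ together with the uniform expansion $h(x+u)=h(x)+h'(x,u)+o(\|u\|)$.
\item The paper starts from (ii). It uses outer semicontinuity of $\partial h$ at $x$ plus the convex mean value theorem, comparing $h(z)-h(x)=\langle w^*,z-x\rangle$ with some $y\in\partial h(x)\subseteq\partial g(x)$.
\end{itemize}
Yours trades the mean value theorem for a Dini-type uniformity argument on the unit sphere.
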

 \textit{Proof.} For $(i)\Rightarrow (ii),$ as $x\in \textrm{int}\left(\dom h\right),$ $\partial h(x)$ is nonempty, and since 
 $$f^\prime(x,d)=g^\prime(x,d)-h^\prime (x,d)\ge 0\quad\forall d\in\R^n,$$ for all $y\in \partial h(x),$ all $d\in\R^n,$ 
 $$\langle y,d\rangle\le h^\prime(x,d)\le g^{\prime}(x,d),$$
 therefore, $\partial h(x)\subseteq \partial g(x).$\\
 For $(ii)\Rightarrow (i),$ since $\partial h(x)\not=\emptyset,$ $h^\prime(x,d)>-\infty$ for all $d\in\R^n.$ Let $d\in\R^n$ be given. If $h^\prime(x,d)=+\infty,$ from the definition,
 $$h^\prime(x,d)=\inf_{t>0}\frac{h(x+td)-h(x)}{t},$$
 then $h(x+td)=+\infty$ for all $t>0,$ so $g(x+td)=+\infty,$ for all $t>0$ (since the set of values of the function $f$ is $(-\infty,+\infty]$). Thus  $g^\prime(x,d)=+\infty.$ Otherwise, consider now the case $h^\prime(x,d)<+\infty.$ Denote by $L:=\{td:\;t\in\R\},$ the line with the direction $d,$ and define the linear function $l$ on $L$ by $l(td):=th^\prime(x,d),\,t\in\R.$ Since $h^\prime(x,\cdot)$ is sublinear, one has $l(td)\le h^\prime(x,td),\,\forall t\in\R.$ By virtue of the Hahn-Banach theorem, there exists an linear extension of $l$ on the whole space $\R^n,$ say $x^\star\in\R^n,$ such that
 $$ \langle x^\star,v\rangle \le h^\prime(x,v),\,\forall v\in\R^n\;\;\text{and}\;\;\langle x^\star,d\rangle = h^\prime(x,d).$$
 Therefore $x^*\in\partial h(x)\subseteq\partial g(x),$ consequently,
 $\langle x^\star,d\rangle = h^\prime(x,d)\le g^\prime(x,d),$
 which completes the proof.
 \vskip 0.2cm
 \noindent When $x\in \text{int}(\dom h),$  due to the upper semicontinuous property of the subddifferential of $h$ at $x\in \textrm{int}\left(\dom h\right),$ for any $\varepsilon>0,$ there is $\delta>0$ such that
 \begin{equation}\label{Upper semicont}
 \partial h(z)\subseteq \partial h(x)+\varepsilon B,\;\;\forall z\in B(x,\delta).
 \end{equation}
 Let $\varepsilon>0$ be given. For $\delta$ along with $\varepsilon$ as above, for $z\in B(x,\delta),$ by virtue of the mean value theorem, there are $w\in [x,z]$ and $w^*\in\partial h(w)$ such that 
 $$h(z)-h(x)=\langle w^*,z-x\rangle.$$
 By (\ref{Upper semicont}), there is $y\in\partial h(x) (\subseteq \partial g(x))$ such that $\|w^*-y\|\le\varepsilon.$ This together the preceding relation imply
 $$\begin{array}{ll}
 f(z)-f(x)&=g(z)-g(x)-(h(z)-h(x))\\
 &\ge g(z)-g(x)-\langle y,z-x\rangle-\varepsilon\|z-x\|\ge -\varepsilon\|z-x\|,\;\forall z\in B(x,\delta),
 \end{array}$$
 which shows $0\in\partial^Ff(x).$
 \hfill{$\Box$}
 \section{Algorithm for finding directional stationary points of DC programs}
 We begin by recalling the standard DCA scheme for computing a critical point of a DC program.
\subsection{The standard DCA}
Consider the standard DC program $(P):$
\begin{equation}\label{P}
(P)\quad\quad\alpha:=\textrm{inf}\left\{f(x):=g(x)-h(x):\;\; x\in\R^n\right\},
\end{equation}
where $g,h:\R^n\to\R\cup\{+\infty\}$ are lower semicontinuous convex functions, and the convention $(+\infty)-(+\infty)=+\infty$ is used. Canonically, the infimum value $\alpha$ of problem $(P)$ is assumed to be finite. This assumption implies that 
\begin{equation}\label{dom relation}
\dom g\subseteq \dom h.
\end{equation}
In the convex approach to DC programming, the DC Algorithm (DCA) is based on the principles of local optimality and DC duality. It involves constructing two sequences, $\{x^k\}$ and $\{y^{k}\}$ (which are candidates for primal and dual solutions, respectively), and iteratively improving them. The sequences $\{g(x^k)-h(x^k)\}$ and $\{h^{\ast }(y^{k})-g^{\ast }(y^{k})\}$ decrease at each iteration, ultimately leading to the limits $x^{\infty }$ and $y^{\infty },$ which satisfy local optimality conditions (for more details, refer to \cite{ATao05}, \cite{PhLe1}, \cite{PhLe2}).
\vskip 0.2cm
The simplified form of the  DCA involves constructing the sequences $\{x^k\}$ and $\{y^k\}$, starting from a given point $x^0\in\dom (\partial h)$, with the following steps:
\vskip 0.2cm
\begin{equation}\notag
\boxed{\textrm{(DCA)}\quad  y^k\in \partial h(x^k); \: x^{k+1}\in\partial g^*(y^k)=\textrm{argmin}\{g(x)-\langle y^k,x\rangle: x\in\R^n\}.}
\end{equation}
It has been shown (\cite{ATao05}) that the sequence of function values  $\{f(x^k)\}$ with $\{x^k\}$ being the sequence generated by DCA  is decreasing. Furthermore, when either $g$ or $h$ is strongly convex, every limit point of $\{x^k\}$ becomes a (DC-)critical point of the optimization problem $(P).$
\vskip 0.2cm

In the following subsections, we present a generic DC algorithmic framework for solving $(P)$, which ensures convergence to a directional stationary point under appropriate assumptions, and demonstrate its effectiveness without requiring any particular structure on the function $h$. We first develop a generic scheme based on the property (proved in Remark 1) that, for any convex function $h$ and a point $x^k \in \mathrm{dom}(\partial h)$, there exists a finite family of continuously differentiable convex functions $\{h_i\}_{i\in I(x^k)}$ such that
$$
\max_{i\in I(x^k)} h_i(x^k) = h(x^k),  
\quad\text{and}\quad  
\max_{i\in I(x^k)} h_i(x) \le h(x) \quad \forall x\in \mathbb{R}^n.
$$
The generic algorithm is then specialized to three possible cases:
\\
(i) $h$ is the pointwise maximum of a finite family of continuously differentiable convex functions;\\
(ii) $h$ is the pointwise maximum of an infinite family of continuously differentiable convex functions; and\\
(iii) $h$ is an arbitrary continuous convex function.

\subsection{The generic scheme for finding a directional stationary point.}

\noindent\rule{12.7cm}{1.5pt}\\
{\bf Algorithm 1:} Finding a directional stationary point \\ 
\noindent\rule{12.7cm}{1pt}\\
\texttt{Initialization:} Initial data: $x^0\in\dom (\partial h),$ and set $k=0.$ 
 \vskip 0.2cm
\texttt{Repeat:} For $k=0,1,...,$ 
\begin{itemize}
 \item[1.] Pick a finite set of differentiable convex functions $\{h_i(x): i\in I(x^k)\}$, where $I(x^k)$ is a finite index set, such that 
 \begin{equation}\label{Diff convex minorant}
 \max_{i\in I(x^k)} h_i(x^k)=h(x^k),\;\;\mbox{and}\;\;\max_{i\in I(x^k)} h_i(x)\le h(x)\;\forall x\in\R^n.
 \end{equation}
 
 \item [2.] For each $i\in I(x^k),$ compute a solution $x^{k+1}_i$ of the convex program
 \begin{equation}\label{Convex prog}
 \min\left\{g(x)-\langle \nabla h_i(x^k),x\rangle:\;\; x\in\R^n\right\}.
 \end{equation} 

\item [3.] Set
\begin{equation}\label{next step}
x^{k+1}:=\textrm{argmin}\{g(x^{k+1}_i)-\langle \nabla h_i(x^k),x^{k+1}_i-x^k\rangle- h_i(x^k):\;\; i\in I(x^k)\}.
\end{equation}  
\item[4.] Set $k:=k+1.$\\
\texttt{Until} Stopping criterion.
\end{itemize}
\noindent\rule{12.7cm}{1.5pt}
\vskip 0.5cm
\noindent\textbf{Remark 1.} Note that the existence of a finite family of continuously differentiable convex functions satisfied (\ref{Diff convex minorant}) is equivalent to $x^k\in \dom (\partial h).$ Indeed, by setting $h_{x^k}(x)=\max_{i\in I(x^k)}h_i(x),\, x\in\R^n,$ relations (\ref{Diff convex minorant}) yield 
$$\emptyset\not=\partial h_{x^k}(x^k)\subseteq \partial h(x^k).$$ Conversely when $x^k\in\dom (\partial h),$ we can take $I(x^k):=\{y^k\}$ for any $y^k\in\partial h(x^k),$ and define the quadratic/Affine convex function:
$$h_{y^k}(x^k)=\frac{\rho(h)}{2}\|x-x^k\|^2+\langle y^k,x-x^k\rangle -h(x^k),\;\; x\in\R^n.$$
Then $h_{y^k}$ satisfies relations (\ref{Diff convex minorant}). And, in the case when $I(x^k)$ is singleton, Algorithm 1 reduces to the standard DCA.
\vskip 0.2cm
In what follows, we denote by
\begin{equation}\label{def func}
    f_i(x):=g(x)-h_i(x),\;\;\textrm{for}\; i\in I(x^k),\, k\in\N.
\end{equation}
\noindent The convergence to a directional stationary point of Algorithm 1 is stated in the following theorem.
\begin{theorem}\label{Convergence1} Suppose that
$$\alpha=\inf\{f(x)=g(x)-h(x):\;\;x\in\R^n\}>-\infty.$$
Let $\{x^k\}$ be a sequence generated by Algorithm 1. One has
\begin{itemize}
\item[(a)] For all $k\in\N,$ by setting $\rho_k:=\min_{i\in I(x^k)}\rho(h_i),$
\begin{equation}\label{Decreasing}
f(x^{k+1})\le f(x^k)-\frac{\rho_k}{2}\|x^{k+1}-x^k\|^2.
\end{equation}
As a result, $\{f(x^k)\}$ is a decreasing sequence. If $\inf_{k\in\N}\rho_k:=\rho>0$, then $\lim_{k\to\infty}\|x^{k+1}-x^k\|=0.$\\
\item[(b)] If
\begin{equation}\label{Nabla bounded}
\sup_{k\in\N}\max_{i\in I(x^k)}\|\nabla h_i(x^k)\|<+\infty,
\end{equation}
 then any limit point of $\{x^k\}$ at which $h$ is continuous is a $DC-$critical point of $(P).$ 
 \item[(c)] For given a limit point $x^\infty$ of $\{x^k\}$ at which $h$ is continuous, if for every $x\in\R^n,$ there is  a subsequence $\{x^{l_k}\}$ converging to $x^\infty$ such that  
\begin{equation}\label{D stationary cond}
\begin{array}{ll}
\limsup_{k\to\infty}\max_{i\in I(x^{l_k})}&\left[\langle\nabla h_i(x^{l_k}),x-x^{l_k}\rangle +h_i(x^{l_k})\right]\\
& \ge\langle h^\prime(x^\infty),x-x^\infty\rangle +h(x^\infty),
\end{array} 
\end{equation}
then $x^\infty$ is a directional stationary point of $(P).$
\end{itemize}
\end{theorem}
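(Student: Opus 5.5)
For (a) I will compare $x^{k+1}$ with an active index. Let $i_k\in I(x^k)$ be such that $h_{i_k}(x^k)=h(x^k)$; it exists by (\ref{Diff convex minorant}). Because $x^{k+1}_{i_k}$ minimizes (\ref{Convex prog}), we have
$$g(x^{k+1}_{i_k})-\langle\nabla h_{i_k}(x^k),x^{k+1}_{i_k}-x^k\rangle-h_{i_k}(x^k)\le g(x^k)-h(x^k)=f(x^k).$$
Suppose the minimum in (\ref{next step}) is attained at index $j$, so $x^{k+1}=x^{k+1}_j$. The strong convexity of $h_j$ with modulus $\rho(h_j)\ge\rho_k$ gives
$$h_j(x^{k+1})\ge h_j(x^k)+\langle\nabla h_j(x^k),x^{k+1}-x^k\rangle+\tfrac{\rho_k}{2}\|x^{k+1}-x^k\|^2.$$
Since $h\ge h_j$, combining the two displays yields (\ref{Decreasing}). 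The sequence $\{f(x^k)\}$ is then decreasing and bounded below by $\alpha$. Summing (\ref{Decreasing}) telescopically gives $\sum_k\rho_k\|x^{k+1}-x^k\|^2<\infty$, and hence $\|x^{k+1}-x^k\|\to0$ when $\inf_k\rho_k=\rho>0$.

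The key quantity for (b) and (c) is the model value
$$m_k:=\min_{i\in I(x^k)}\min_x\bigl[g(x)-\langle\nabla h_i(x^k),x-x^k\rangle-h_i(x^k)\bigr].$$
By construction $m_k\le f(x^k)$. The computation in (a) also gives $f(x^{k+1})\le m_k$. Both $f(x^k)$ and $f(x^{k+1})$ converge to the same limit $\bar f$, so $m_k\to\bar f$. Consequently, for every $x\in\R^n$,
$$g(x)-\max_{i\in I(x^k)}\bigl[\langle\nabla h_i(x^k),x-x^k\rangle+h_i(x^k)\bigr]\ge m_k.$$

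For (b), take the subsequence $x^{l_k}\to x^\infty$, and let $y^{l_k}=\nabla h_{i_{l_k}}(x^{l_k})$ for the active index $i_{l_k}$. By (\ref{Nabla bounded}) these vectors are bounded, so we may assume $y^{l_k}\to y$. Since $h$ is continuous at $x^\infty$, $h_{i_{l_k}}(x^{l_k})=h(x^{l_k})\to h(x^\infty)$. The affine minorant $h_{i}(x^{l_k})+\langle y^{l_k},\cdot-x^{l_k}\rangle\le h$ passes to the limit, giving $y\in\partial h(x^\infty)$. For the other side we need $g(x^\infty)\le\bar f+h(x^\infty)$; this follows from the lower semicontinuity of $g$ together with $f(x^{l_k})\to\bar f$ and the continuity of $h$. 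Taking limits in the displayed inequality (using only the active index) gives $g(x)-\langle y,x-x^\infty\rangle-h(x^\infty)\ge\bar f\ge g(x^\infty)-h(x^\infty)$. Hence $y\in\partial g(x^\infty)\cap\partial h(x^\infty)$, which is DC-criticality.

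For (c), fix $x$ and use the subsequence given by (\ref{D stationary cond}). I interpret the term $\langle h'(x^\infty),x-x^\infty\rangle$ as standing for $h'(x^\infty;x-x^\infty)$. Taking $\liminf$ in the displayed inequality and applying (\ref{D stationary cond}) gives
$$g(x)-h(x^\infty)-h'(x^\infty;x-x^\infty)\ge\bar f\ge g(x^\infty)-h(x^\infty).$$
I then set $x=x^\infty+td$, divide by $t$, and let $t\downarrow0$. Since $h'(x^\infty;td)=t\,h'(x^\infty;d)$, this yields $g'(x^\infty;d)\ge h'(x^\infty;d)$, that is, $f'(x^\infty;d)\ge0$ for every $d$.

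The main obstacle is the limiting step justifying $g(x^\infty)-h(x^\infty)\le\bar f$. Lower semicontinuity of $g$ gives $g(x^\infty)\le\liminf g(x^{l_k})$, and continuity of $h$ then gives $f(x^\infty)\le\lim f(x^{l_k})=\bar f$. I will verify this carefully, including the case where $x^\infty\in\dom g$ must be checked.
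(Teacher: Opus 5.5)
Your proof is correct and follows the paper's argument in all three parts. The sandwich $f(x^{k+1})\le m_k\le f(x^k)$ is the same chain of inequalities the paper uses, and in fact only $m_k\ge\bar f$ is needed.

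The one substantive difference is in (b), where you pass to the limit along the gradient of an \emph{active} index. The paper instead uses the index $i_{l_k}$ that produced $x^{l_k+1}$ and asserts $h_{i_{l_k}}(x^{l_k})=h(x^{l_k})$, which need not hold. For example, take $g=|x-\tfrac12|$, $h=\max\{-x,x-1\}$ and $I(x^k)\equiv\{1,2\}$, choose the subproblem solutions $0$ and $1$, and break the tie in Step 3 toward the inactive index. The iterates then cycle $0,1,0,\dots$, and the selected gradient $1$ is not in $\partial h(0)=\{-1\}$. So your choice of index is what actually justifies $y\in\partial h(x^\infty)$ and the passage to the limit in the optimality inequality.
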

\vskip 0.2cm
\textit{Proof.} For $k\in\N,$ let $i_k\in I(x^k)$ such that $x^{k+1}=x^{k+1}_{i_k}.$ For all $i\in I(x^k),$ since $x^{k+1}$ is a solution of problem (\ref{Convex prog}),
\begin{equation}\label{estim 1}
g(x^{k+1}_i)-\langle \nabla h_i(x^k),x^{k+1}_i-x^k\rangle\le g(x)-\langle \nabla h_i(x^k),x-x^k\rangle,\;\;\forall x\in\R^n.  
\end{equation}
By the $\rho(h_{i_k})-$convexity of $h_{i_k},$
\begin{equation}\label{estim 2}
f(x^{k+1})+\frac{\rho_(h_{i_k})}{2}\|x^{k+1}-x^k\|^2\le g(x^{k+1})-\langle \nabla h_{i_k}(x^k),x^{k+1}-x^k\rangle-h_{i_k}(x^k).
\end{equation}
By the definition (\ref{next step}) of $x^{k+1},$ from relations (\ref{estim 1}) (by setting $x:=x^k$ into account) and (\ref{estim 2}), one derives 
\begin{equation}\label{estim 3}\notag
\begin{array}{ll}
f(x^{k+1})+\frac{\rho_k}{2}\|x^{k+1}-x^k\|^2&\le g(x^{k+1}_i)-\langle \nabla h_i(x^k),x^{k+1}_i-x^k\rangle-h_i(x^k)\\
&\le g(x^k)-h_i(x^k)\quad\mbox{for all}\;\; i\in I(x^k).
\end{array}
\end{equation}
Therefore, one obtains (\ref{Decreasing}): 
$$\begin{array}{ll}
f(x^{k+1})+\frac{\rho_k}{2}\|x^{k+1}-x^k\|^2&\le  g(x^k)-\max_{i\in I(x^k)}
h_i(x^k)\\
&=g(x^k)-h(x^k)=f(x^k).
\end{array}$$
Consequently, $\{f(x^k)\}$ is a decreasing sequence, and set 
\begin{equation}\label{limit value}
\lim_{k\to\infty}f(x^k)=f_\infty>-\infty.
\end{equation}
In view of relations (\ref{estim 2}), (\ref{estim 3}), 
\begin{equation}\label{estim 4}\notag
\begin{array}{ll}
f(x^{k+1})&=g(x^{k+1})-h(x^{k+1})\\
&\le g(x^{k+1})-\langle \nabla h_{i_k}(x^k),x^{k+1}-x^k\rangle-h_{i_k}(x^k)\le f(x^k).
\end{array} 
\end{equation}
 Obviously, if $\rho>0$, then (\ref{Decreasing}) yields immediately $\lim\|x^{k+1}-x^k\|=0.$\\
For $(b)$, 
 let $x^\infty$ be a limit point  of the sequence $\{x^k\}$. Let $\{x^{l_k}\}$ be a subsequence of $\{x^k\}$ converging to $x^\infty.$ As $h$ is lower semicontinuous, for each $i_{l_k}\in I(x^{l_k}),$ one has 
\begin{equation}\label{h limit}
\liminf_{k\to\infty}h_{i_{l_k}}(x^{l_k})=\liminf_{k\to\infty}h(x^{l_k})\ge h(x^\infty).
\end{equation}
  Without loss of generality, by (\ref{Nabla bounded}), one can assume $\nabla h_{i_{l_k}}(x^k)\rightarrow y^\infty\in\R^n$ as $k\rightarrow \infty.$ From
  $$\begin{array}{ll}
  \langle \nabla h_{i_{l_k}}(x^{l_k}),x-x^{l_k}\rangle&\le h_{i_{l_k}}(x)-h_{i_{l_k}}(x^{l_k})\\
  &\le h(x)-h_{i_{l_k}}(x^{l_k}),\;\;\mbox{for all}\;x\in\R^n,
  \end{array}$$
  in view of (\ref{h limit}), by letting $k\to\infty,$ one obtains
  $$\langle y^\infty, x-x^\infty\rangle \le h(x)-h(x^\infty),\;\;\mbox{for all}\;x\in\R^n.$$ 
  That is, $y^\infty\in\partial h(x^\infty).$ In the other hand, as $x^{l_k+1}$ is a solution of the problem
  $$\min\{g(x)-\langle \nabla h_{i_{l_k}}(x^{l_k}),x\rangle:\;\;x\in\R^n\},$$
  and furthermore $f$ is lower semicontinuous; $\{f(x^k)\}$ is decreasing, then $f(x^\infty)\le f(x^k)$ for all $k\in\N,$ one has
  \begin{equation}\label{optim g}
  \begin{array}{ll}
  f(x^\infty)&\le f(x^{l_k+1})=g(x^{l_k+1})-h(x^{l_k+1})\\
  &\le g(x^{l_k+1})-\left[\langle \nabla h_{i_{l_k}}(x^{l_k}),x^{l_k+1}-x^{l_k}\rangle+ h_{i_{l_k}}(x^{l_k})\right]\\
  &\le g(x)-\langle \nabla h_{i_{l_k}}(x^{l_k}),x-x^{l_k}\rangle-h_{i_{l_k}}(x^{l_k}),\;\forall x\in\R^n.
  \end{array}
  \end{equation}
  Therefore, by letting $k\rightarrow\infty,$ since $h$ is continuous at $x^\infty,$
  $$g(x^\infty)\le g(x)-\langle y^\infty, x-x^\infty\rangle,\;\forall x\in\R^n, $$
  that is, $y^\infty\in\partial g(x^\infty),$ thus
  $$y^\infty\in \partial g(x^\infty)\cap \partial h(x^\infty),$$ and so $x^\infty$ is a $DC-$critical point of $(P).$
  \vskip 0.2cm
  For $(c),$ suppose now the condition (\ref{D stationary cond}) is verified for some limit point $x^\infty$ of $\{x^k\}$. For given $x\in\R^n,$ one can find a subsequence $\{x^{l_k}\}$ converging to $x^\infty,$ satisfying (\ref{D stationary cond}). By (\ref{estim 1}), and the definition of $x^{k+1},$ for each $i\in I(x^{l_k}),$ one has
  $$\begin{array}{ll}
  f(x^\infty)&\le f(x_{l_k+1}) = g(x_{l_k+1}) -h(x_{l_k+1}) \\
  &\le g(x^{l_k+1}_i)-\langle \nabla h_{i}(x^{l_k}),x^{l_k+1}_i-x^{l_k}\rangle-h_{i}(x^{l_k})\\ 
  &\le g(x)-\langle \nabla h_i(x^{l_k}),x-x^{l_k}\rangle -h_i(x^{l_k}),\;\;\forall i\in I(x^{l_k}).
  \end{array}$$
  Consequently,
  $$\begin{array}{ll}
  f(x^\infty)
  &\le g(x)-\max_{i\in I(x^{l_k})}\left[\langle \nabla h_i(x^{l_k}),x-x^{l_k}\rangle +h_i(x^{l_k})\right].
  \end{array}$$
  By letting $k\rightarrow \infty,$  one obtains
  $$\begin{array}{ll}
  g(x^\infty)-h(x^\infty)&\le g(x)-\limsup_{k\to\infty}\max_{i\in I(x^{l_k})}\left[\langle \nabla h_i(x^{l_k}),x-x^{l_k}\rangle +h_i(x^{l_k})\right]\\
  &\le g(x)-h^\prime(x^\infty,x-x^\infty)-h(x^\infty),\end{array}$$ 
  which follows
  $$h^\prime(x^\infty,x-x^\infty)\le g(x)-g(x^\infty),\;\;\mbox{for all}\; x\in\R^n,$$ 
  equivalently, $h^\prime(x^\infty,v)\le g^\prime(x^\infty,v),$ for all $v\in\R^n,$ that is $x^\infty$ is a directional stationary point of $(P).$\hfill{$\Box$}
  \vskip 0.5cm
  \noindent\textbf{Remark 2.} 
  \begin{itemize}
  \item[2.1.] It is worth noting that Theorem \ref{Convergence1} does not require the strong convexity of either $g$ or $h$ to ensure convergence to critical or directional stationary points, unlike some existing literature on DC algorithms. This serves as a notable advantage of the presented convergence result. In contrast, in \cite{PRA}, a proximal regularization term of the form $\frac{\rho}{2}\|x-x^k\|^2$ for some $\rho>0$ was introduced. This regularization can be recognized as a specific case of the DCA applied to the DC decomposition:
  $$f=\left(g+\frac{\rho}{2}\|\cdot\|^2\right)-\left(h+\frac{\rho}{2}\|\cdot\|^2\right).$$ 
  Specifically, in Algorithm 1, for each $i\in I(x^k),$ when computing $x^{k+1}_i,$ instead of using (\ref{Convex prog}), the regularized convex programs are solved as follows:
\begin{equation}\label{Convex prog bis}
\min\left\{g(x)+\frac{\rho}{2}\|x-x^k\|^2-\langle \nabla h_i(x^k),x\rangle: x\in\R^n\right\},
\end{equation}
and then $x^{k+1}$ is updated according to
\begin{equation}\label{Next step bis}
x^{k+1}:=\textrm{argmin}\left\{f(x^{k+1}_i)+\frac{\rho}{2}\|x^{k+1}_i-x^k\|^2: \; \; i\in I(x^k)\right\}.
\end{equation}
Theorem \ref{Convergence1} remains applicable to this updated approach, with a similar proof.
\item [2.2.] Upon closer examination of the proof of Theorem \ref{Convergence1}, it becomes evident that the convergence of Algorithm 1 is still ensured if the next step $x^{k+1}$ is updated using the following approach instead of (\ref{next step}):
\begin{equation}\label{Next step @}
x^{k+1}:=\textrm{argmin}\left\{f(x^{k+1}_i): \;\; i\in I(x^k)\right\},
\end{equation}
thereby eliminating the need for a proximal regularization term as in \cite{PRA}.
\end{itemize}
\vskip 0.2cm
An important question arises: How can we update the finite family of continuously differentiable convex functions ${h_i: \;i\in I(x^k)}$ at each iteration $k$ to fulfill conditions (\ref{Diff convex minorant}) and (\ref{D stationary cond}), ensuring convergence to a directional stationary point? In the subsequent subsection, we provide explicit strategies for updating ${h_i: \;i\in I(x^k)}$ to satisfy condition (\ref{D stationary cond}), particularly when $h:\R^n\to\R$ is a continuous convex function. Notably, when $h$ is the pointwise maximum of a finite family of continuously convex functions, Algorithm 1 encompasses the method introduced in \cite{PRA}.
   
  \subsection{ The strategies to update the families of differentiable convex functions \texorpdfstring{$\{h_i:\;i\in I(x^k)\}$}{}}
  \subsubsection{Case 1: \texorpdfstring{$h$}{} is a pointwise maximum of finitely many differentiable convex functions}
Let's start by considering the case where $h$ is of the form
\begin{equation} \label{Finite maximum}
h(x):=\max\{h_i(x): \;   i=1,...,m\},
\end{equation}
where $h_i:\R^n\to\R$ are differentiable convex functions. For a given $\delta>0$ and $x\in\R^n$, we define the sets of \textit{active} and \textit{$\delta$-approximate active} indices as follows:
  \begin{equation} \label{Active set}
  M(x):=\{i\in\{1,...,m\}: \; h_i(x)=h(x)\};
  \end{equation}
  \begin{equation}\label{Approx active}
  M_\delta(x):=\{i\in\{1,...,m\}: \; h_i(x)\ge h(x)-\delta\}.
  \end{equation}
  The basic algorithm presented  in \cite{PRA} is a particular case of Algorithm 1 by taking 
\begin{equation}\label{Stra 1}
\boxed{I(x^k):= M_\delta(x^k),\; k=0,1,...,\;\mbox{for some}\;\delta>0.}
\end{equation}
By employing this updated set $I(x^k)$, it is clear that (\ref{Diff convex minorant}) is evidently satisfied. Additionally, (\ref{D stationary cond}) holds for any limit point $x^\infty$ of the sequence $\{x^k\}$. To demonstrate this, consider a subsequence $\{x^{l_k}\}$ converging to $x^\infty$. For any $\delta>0$, when $k$ is sufficiently large, it follows that $M(x^\infty)\subseteq M_\delta(x^{l_k})$. Consequently, for all $x\in\R^n$,  
\begin{equation}\label{DS Cond 1}
\begin{array}{ll}&\lim_{k\to\infty}\max_{i\in M_\delta(x^{l_k})}\left[\langle \nabla h_i(x^{l_k}),x-x^{l_k}\rangle +h_i(x^{l_k})\rangle \right]\\
&\ge\lim_{k\to\infty}\max_{i\in M(x^\infty)}\left[\langle \nabla h_i(x^{l_k}),x-x^{l_k}\rangle +h_i(x^{l_k})\rangle \right]\\
&=\max_{i\in M(x^\infty)}\langle \nabla h_i(x^\infty),x-x^\infty\rangle +h(x^\infty)\\
&=h^\prime(x^\infty,x-x^\infty) +h(x^\infty).
\end{array}
\end{equation}
We introduce an alternative update scheme that has the potential to significantly reduce the number of convex programs that need to be solved.
 \vskip 0.2cm
 \noindent\rule{12.7cm}{0.5pt}\\
 \textbf{Update 1 for $I(x^k):$} The pointwise maximum of a finite family of differentiable convex functions\\
 \noindent\rule{12.7cm}{0.5pt}
  \begin{itemize}
  \item  $k:=0,$ $x^0\in\R^n,$ $I(x^0):=\{i_0\},$ for some $i_0\in M(x^0).$
  \item Pick $\delta>0$ and an integer number $k_0.$ For $k=0,1,..,$ take some $i_{k+1}\in M(x^{k+1})$ and set
   \begin{equation}\label{Update set 1}
   I(x^{k+1}):=\left(M_\delta(x^{k+1})\setminus \bigcup_{j=k-k_0}^{k}I(x^j)\right)\bigcup\left\{i_{k+1}\right\}.
   \end{equation}
  \end{itemize}
 \noindent\rule{12.7cm}{0.5pt} 
 \vskip 0.2cm
 \noindent\textbf{Remark 3.}
 In Update 1, the index sets $I(x^k)$, which dictate the number of convex programs to be solved in step $k$, are updated in a manner that excludes index sets from some previous steps. This reduction in the index sets at each iteration leads to a decreased number of convex programs to be solved, in comparison to the approach (\ref{Stra 1}) in \cite{PRA}. This reduction in computational load is a notable advantage of our algorithm, contributing to improved efficiency and computational performance.
 \vskip 0.2cm
 Obviously, the sets $\{h_i:\;\;i\in I(x^k)\}$ satisfy relation (\ref{Diff convex minorant}) in Algorithm 1. Moreover (\ref{Convex prog}) is also fulfilled.
 \begin{lemma}\label{DS cond update 1} Suppose that $\rho(h_i)>0$ for all $i=1,...,m.$ Then the sets of differentiable convex functions 
 $$\{h_i:\;\;i\in I(x^k)\},\;\; k=0,1,..,$$  generated from Update 1 satisfy the condition (\ref{D stationary cond}) in Theorem \ref{Convergence1} for any limit point of $\{x^k\}$. 
 \end{lemma}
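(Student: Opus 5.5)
The plan is to exploit the fact that condition (\ref{D stationary cond}) only asks, for each fixed $x$, for \emph{some} subsequence converging to $x^\infty$, and that this subsequence may depend on $x$.

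First, since $\rho(h_i)>0$ for all $i=1,\dots,m$, we have $\rho_k\ge\rho:=\min_i\rho(h_i)>0$ for every $k$. Part (a) of Theorem \ref{Convergence1} then gives $\|x^{k+1}-x^k\|\to0$. Fix a limit point $x^\infty$ and a subsequence $x^{l_k}\to x^\infty$. Because successive steps vanish, $x^{l_k-j}\to x^\infty$ for every fixed $j\in\{0,1,\dots,k_0+1\}$. The functions $h_i$ are finite convex, so $h$ and each $h_i$ are continuous. Hence, for $k$ large, $M(x^\infty)\subseteq M_\delta(x^{l_k})$, exactly as in the argument leading to (\ref{DS Cond 1}).

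The key combinatorial step reads Update 1 backwards. Suppose $i\in M_\delta(x^{l_k})$ but $i\notin I(x^{l_k})$. By (\ref{Update set 1}), $i$ was removed only because it lies in $\bigcup_{j=l_k-1-k_0}^{l_k-1}I(x^j)$. Hence every $i\in M(x^\infty)$ belongs to $I(x^{l_k-j})$ for some $j\in\{0,\dots,k_0+1\}$, once $k$ is large enough that $l_k>k_0+1$. This window bookkeeping, including the shift by one in the indices of the union, is the step I expect to need the most care.

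Now fix $x\in\R^n$ and set $d=x-x^\infty$. By the max rule for directional derivatives (Danskin), $h^\prime(x^\infty,d)=\max_{i\in M(x^\infty)}\langle\nabla h_i(x^\infty),d\rangle$. Choose $i^*\in M(x^\infty)$ attaining this maximum. For each large $k$, the previous step gives $j_k\in\{0,\dots,k_0+1\}$ with $i^*\in I(x^{l_k-j_k})$. Since there are finitely many possible values of $j_k$, by pigeonhole we pass to a further subsequence on which $j_k\equiv j$ is constant, and we may also assume the indices $l_k-j$ are increasing. Then $\{x^{l_k-j}\}$ is a subsequence of $\{x^k\}$ converging to $x^\infty$, and
$$\max_{i\in I(x^{l_k-j})}\left[\langle\nabla h_i(x^{l_k-j}),x-x^{l_k-j}\rangle+h_i(x^{l_k-j})\right]\ge\langle\nabla h_{i^*}(x^{l_k-j}),x-x^{l_k-j}\rangle+h_{i^*}(x^{l_k-j}).$$

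A differentiable convex function on $\R^n$ is $C^1$, so $\nabla h_{i^*}$ is continuous. Letting $k\to\infty$, the right-hand side tends to $\langle\nabla h_{i^*}(x^\infty),x-x^\infty\rangle+h_{i^*}(x^\infty)$. This equals $h^\prime(x^\infty,x-x^\infty)+h(x^\infty)$, because $i^*$ is active at $x^\infty$ and attains the Danskin maximum. This is precisely (\ref{D stationary cond}) along the subsequence $\{x^{l_k-j}\}$, which is allowed to depend on $x$. Therefore the condition of Theorem \ref{Convergence1}(c) holds at every limit point $x^\infty$.
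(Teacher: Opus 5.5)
Your proposal is correct and follows the paper's route. You get vanishing steps from $\rho>0$ and Theorem~\ref{Convergence1}(a), pick an active index in $M(x^\infty)$ attaining $h'(x^\infty,x-x^\infty)$, read Update~1 backwards to place it in some $I(x^{l_k-j_k})$ with $j_k\in\{0,\dots,k_0+1\}$, and pass to the limit along the shifted sequence. Your pigeonhole step fixing $j_k\equiv j$ is a small improvement in rigor: the paper never checks that $\{x^{l_k-j_k}\}$ is a genuine subsequence.
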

 \vskip 0.2cm
 \textit{Proof.} Let $x^\infty$ be a limit point of $\{x^k\}$ and let $\{x^{l_k}\}$ be a subsequence converging to $x^\infty.$ Then $M(x^\infty)\subseteq M_\delta(x^{l_k})$ when $k$ is sufficiently large. Let  $x\in\R^n$ be given arbitrarily. As
 $$\max_{i\in M(x^\infty)}\langle \nabla h_i(x^\infty), x-x^\infty\rangle =h^\prime (x^\infty,x-x^\infty),$$
 there is $\bar i\in M(x^\infty)$ with 
 $$\langle \nabla h_{\bar i}(x^\infty), x-x^\infty\rangle =h^\prime (x^\infty,x-x^\infty).$$
  In view of the update (\ref{Update set 1}) of $I(x^k)$, there is $j_k\in\{0,1,...,k_0 +1\}$ such that $\bar i\in I(x^{l_k-j_k}).$ Since $\rho(h_i)>0$ for all $i=1,...,m,$ in view of Theorem 1(b), $\lim_{k\to\infty}\|x^{k+1}-x^k\|=0,$ that implies $\lim_{k\to\infty}x^{l_k-j_k}=\lim_{k\to\infty}x^{l_k}=x^\infty.$ Hence, 
 $$\begin{array}{ll}&\lim_{k\to\infty}\max_{i\in I(x^{l_k-j_k})}\left[\langle \nabla h_i(x^{l_k-j_k}),x-x^{l_k-j_k}\rangle +h_i(x^{l_k-j_k}) \right]\\
&\ge\lim_{k\to\infty}\left[\langle \nabla h_{\bar i}(x^{l_k-j_k}),x-x^{l_k-j_k}\rangle +h_{\bar i}(x^{l_k-j_k}) \right]\\
&=\langle \nabla h_{\bar i}(x^\infty),x-x^\infty\rangle +h(x^\infty)=h^\prime(x^\infty,x-x^\infty) +h(x^\infty).
\end{array}$$ 
\hfill{$\Box$}
\vskip 0.2cm
In \cite{ANT-JOTA18}, it is demonstrated that when the function $h$ is differentiable and possesses locally Lipschitz derivatives, and the function $f$ satisfies the Lojasiewicz subgradient inequality around each limit point of $\{x^k\}$, then the entire sequence $\{x^k\}$ generated by the standard DCA converges, provided that the sequence $\{x^k\}$ remains bounded. The following theorem establishes that this convergence holds for Algorithm 1 with Update 1 as well, under the condition that $h$ is defined as the pointwise maximum of a finite family of convex functions $h_i$
of 
class $C^{1,1}$, and  the functions $f_i:=g-h_i,$ 
$j=1,...,m$ are subanalytic. 
\vskip 0.2cm
\noindent Recall that the \L ojasiewicz subgradient inequality with an exponent $\theta\in (0,1)$ is satisfied for a lower semicontiuous function $f:\R^n\rightarrow \R\cup{+\infty}$ around $\bar x\in\R^n,$ if there are $\kappa,\delta>0$ such that
 \begin{equation}\label{Loj Iqn}
 |f(x)-f(\bar x)|^\theta\le \kappa\|x^*\|,\;\;\mbox{for all}; x\in B(\bar x,\delta)\; x^*\in \partial^Ff(x).
 \end{equation} 
 The \L ojasiewicz gradient inequality was originally formulated by \L ojasiewicz \cite{Loj2} for differentiable subanalytic functions. Subsequently, its scope was expanded to encompass nonsmooth subanalytic functions in \cite{BDL}.
 
\begin{theorem}\label{Loj Conver} Consider the DC program (P) in which the function $h$ is the pointwise maximum of a finite family of differentiable convex functions $\{h_i:\;i=1,...,m\}$ with  locally Lipschitz derivatives $\nabla h_i$ ($i=1,...,m$). Assume that $\rho:=\min \{\rho(h_i):\;i=1,...,m\}>0.$ Suppose further that the sequence $\{x^k\}$ generated by Algorithm 1 with $I(x_k)\subseteq\{1,2,...,m\},$ for all $k\in\N,$ is bounded, and the functions $f_i,\;i=1,...,m,$ satisfy the \L ojasiewicz subgradient inequality around every limit points of $\{x^k\}$. Then the whole sequence $\{x^k\}$ converges to a directional stationary point of (P).
\end{theorem}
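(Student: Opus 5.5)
The plan is the standard Kurdyka--\L ojasiewicz argument: a sufficient decrease estimate, a relative error bound, and the \L ojasiewicz inequality combined through a concave desingularizing function. The difficulty is that $f$ itself is not assumed to satisfy the inequality; only the $f_i=g-h_i$ are. So at step $k$ I will work with the active model $f_{i_k}$, where $i_k\in I(x^k)$ is the index with $x^{k+1}=x^{k+1}_{i_k}$.

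\emph{Step 1 (decrease and bracketing).} Theorem \ref{Convergence1}(a) with $\rho_k\ge\rho>0$ gives $f(x^{k+1})\le f(x^k)-\frac{\rho}{2}\|x^{k+1}-x^k\|^2$. Hence $\|x^{k+1}-x^k\|\to0$ and $f(x^k)\downarrow f_\infty>-\infty$. Set $E_k:=f(x^k)-f_\infty\ge 0$. Two inequalities bracket the model value $r_k:=f_{i_k}(x^{k+1})-f_\infty$:
\begin{itemize}
\item since $h_{i_k}\le h$, we have $f_{i_k}(x^{k+1})\ge f(x^{k+1})$;
\item convexity of $h_{i_k}$ and the chain of inequalities in the proof of Theorem \ref{Convergence1}(a) give $f_{i_k}(x^{k+1})\le g(x^{k+1})-\langle\nabla h_{i_k}(x^k),x^{k+1}-x^k\rangle-h_{i_k}(x^k)\le f(x^k)$.
\end{itemize}
Thus $E_{k+1}\le r_k\le E_k$. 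If $E_k=0$ for some $k$, the decrease inequality forces the sequence to be eventually constant, so I assume $E_k>0$ for all $k$.

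\emph{Step 2 (relative error).} Optimality in (\ref{Convex prog}) gives $\nabla h_{i_k}(x^k)\in\partial g(x^{k+1})$. Since $h_{i_k}$ is differentiable, $\partial^F f_{i_k}(x^{k+1})=\partial g(x^{k+1})-\nabla h_{i_k}(x^{k+1})$, so
$$w^{k+1}:=\nabla h_{i_k}(x^k)-\nabla h_{i_k}(x^{k+1})\in\partial^F f_{i_k}(x^{k+1}).$$
The sequence is bounded and each $\nabla h_i$ is locally Lipschitz, so there is a common constant $L$ on a compact set containing $\{x^k\}$. This gives $\|w^{k+1}\|\le L\|x^{k+1}-x^k\|$.

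\emph{Step 3 (uniform \L ojasiewicz on the limit set; main obstacle).} Let $\Omega$ be the set of limit points; it is compact and nonempty. For each $i$, let $\Omega_i\subseteq\Omega$ be the set of limits of subsequences $x^{k+1}$ along which $i_k=i$. The key claim is that $f_i(\bar x)=f_\infty$ for every $\bar x\in\Omega_i$.
\begin{itemize}
\item Continuity of $h$ and $h_i$ (finite convex functions) handles the $h$-parts.
\item For $g$, plug $x=\bar x$ into the optimality of $x^{k+1}$ in (\ref{Convex prog}) and use boundedness of $\nabla h_i(x^k)$ to get $\limsup g(x^{k+1})\le g(\bar x)$. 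Lower semicontinuity gives the reverse inequality, so $g(x^{k+1})\to g(\bar x)$ along the subsequence.
\item Then $f_i(\bar x)=\lim r_k+f_\infty=f_\infty$ by the bracketing in Step 1.
\end{itemize}
Now cover each compact $\Omega_i$ by finitely many balls from the \L ojasiewicz hypothesis for $f_i$. Take the largest exponent $\theta$, replacing smaller exponents by $\theta$ is harmless because $r_k\to0$, together with a common $\kappa$ and a common radius $\delta$. A contradiction argument with the finitely many indices shows $\operatorname{dist}(x^{k+1},\Omega_{i_k})<\delta$ for all large $k$. This yields $r_k^\theta\le\kappa\|w^{k+1}\|\le\kappa L\|x^{k+1}-x^k\|$, and since $E_{k+1}\le r_k$ also $E_{k+1}^\theta\le \kappa L\|x^{k+1}-x^k\|$. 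This is the step I expect to need the most care: the bookkeeping of which $f_i$ is used near which limit point, and the continuity of $g$ along the iterates.

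\emph{Step 4 (finite length and conclusion).} With $\varphi(s)=s^{1-\theta}$, concavity and Steps 1--3 give
$$\varphi(E_{k+1})-\varphi(E_{k+2})\ge(1-\theta)E_{k+1}^{-\theta}(E_{k+1}-E_{k+2})\ge\frac{(1-\theta)\rho}{2\kappa L}\,\frac{\|x^{k+2}-x^{k+1}\|^2}{\|x^{k+1}-x^k\|}.$$
Apply $2\sqrt{ab}\le a+b$ to $\|x^{k+2}-x^{k+1}\|\le\sqrt{\|x^{k+1}-x^k\|\cdot C(\varphi(E_{k+1})-\varphi(E_{k+2}))}$ and sum the telescoping terms. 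This gives $\sum_k\|x^{k+1}-x^k\|<\infty$, so $\{x^k\}$ is Cauchy and converges to some $x^\infty$. Finally, condition (\ref{D stationary cond}) holds at $x^\infty$, by Lemma \ref{DS cond update 1} for Update 1 or by (\ref{DS Cond 1}) for the choice (\ref{Stra 1}). Theorem \ref{Convergence1}(c) then shows that $x^\infty$ is a directional stationary point. For a general choice $I(x^k)\subseteq\{1,\dots,m\}$ this last step relies on such a condition holding, and I would state it explicitly.
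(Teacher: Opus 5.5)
Your proof is correct. It has the same skeleton as the paper's: sufficient decrease from Theorem \ref{Convergence1}(a), the relative error $\nabla h_{i_k}(x^k)-\nabla h_{i_k}(x^{k+1})\in\partial^F f_{i_k}(x^{k+1})$ for the selected model, the \L ojasiewicz inequality for that $f_{i_k}$ combined with $f\le f_{i_k}$, and the $t^{1-\theta}$ telescoping. You differ in how you fix the reference value $f_{i_k}=f_\infty$ at the relevant limit points.

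The paper chooses, for each limit point $z$, constants $\gamma(z),\eta(z)$ such that $f_i(z)\le f(z)+\gamma(z)+L\eta(z)$ forces $i\in M(z)$. It then runs the model-value chain to show that $i_k$ is active at a nearby covering center $z^{j_k}$, and writes $f_{i_k}(z^{j_k})=f(z^{j_k})=f_\infty$. The last equality, constancy of $f$ on the limit set, is used without proof.

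You instead combine the upper bracket $f_{i_k}(x^{k+1})\le f(x^k)$ with $g(x^{k+1})\to g(\bar x)$, which you derive from optimality in (\ref{Convex prog}). This gives $f_i\equiv f_\infty$ on $\Omega_i$ directly, and a pigeonhole argument over the finitely many indices finishes the localization. Your route dispenses with the $\gamma,\eta$ separation and supplies exactly the continuity step the paper leaves implicit. It even recovers the paper's activity conclusion: the same continuity gives $f(\bar x)=f_\infty=f_i(\bar x)$, i.e., $i\in M(\bar x)$.

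One technical point on uniformizing: cover $\Omega_i$ by balls of half the \L ojasiewicz radii. Then $\mathrm{dist}(x^{k+1},\Omega_{i_k})<\delta$ genuinely places $x^{k+1}$ inside one of the original balls. The paper's passage from $d(x^k,\mathcal{C})<\delta/2$ to $\|x^k-z^{j_k}\|<\delta/2$ has the same gap.

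Your closing caveat is justified. The paper's proof ends once finite length is established and never addresses directional stationarity. For an arbitrary admissible $I(x^k)\subseteq\{1,\dots,m\}$, that part of the statement is false. Take $g(x)=x^2$, $h_1(x)=\frac12x^2$, $h_2(x)=\frac12x^2+x$ and $I(x^k)=\{1\}$, which satisfies (\ref{Diff convex minorant}) whenever $x^k<0$. From $x^0<0$ one gets $x^k=x^0/2^k\to 0$ with all hypotheses satisfied, yet $f=\frac12x^2-\max(0,x)$ has $f'(0;1)=-1$.

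So the conclusion requires (\ref{D stationary cond}). Lemma \ref{DS cond update 1} supplies it for Update 1, the setting announced just before the theorem, and (\ref{DS Cond 1}) supplies it for (\ref{Stra 1}). After that, Theorem \ref{Convergence1}(c) finishes the argument exactly as you say.
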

\vskip 0.2cm
\noindent\textit{Proof.} The proof is inspired by some ideas from \cite{AB},  \cite{ANT-JOTA18}. In view of Theorem \ref{Convergence1}, for $k=0,1,...,$
\begin{equation}\label{decreasing inq}
\frac{\rho}{2}\|x^{k+1}-x^k\|^2\le f(x^k)-f(x^{k+1}).
\end{equation}
and thus $\lim_{k\to\infty}\|x^{k+1}-x^k\|=0$ and $\lim_{k\to\infty}f(x^k)=f_\infty.$ Without loss of generality, we can assume that
$f_\infty=0.$ As $\{x^k\}$ is bounded and the derivatives $\nabla h_i$ ($i=1,...,m$) are locally Lipschitz, for some $L>0,$ for all $i=1,...,m,$
\begin{equation}\label{BL}
 \|\nabla h_i(x^k)\|\le L\;\;\textrm{and}\;\;\|\nabla h_i(x^k)-\nabla h_i(x^{k+1})\|\le L\|x^k-x^{k+1}\|.
 \end{equation}
For each $z\in \mathcal{C},$ let $\gamma(z),\eta(z)>0$ be such that
\begin{equation}\label{@}
\begin{array}{ll}
&|h(x)-h(z)|\le \gamma(z)/2,\;\;x\in B(z,\eta(z));\\
& f_i(z)\le f(z)+ \gamma(z)+L\eta(z),\;i\in \{1,...,m\}\;\;\Rightarrow\;\;i\in M(z).
\end{array}
 \end{equation}
For each $k=0,1,...,$ let $i_k\in I(x^k)\subseteq \{1,...,m\}$ such that 
\begin{equation}\label{!}
 x^{k+1}=x^{k+1}_{i_k}=\textrm{argmin}\{g(x)-\langle \nabla h_{i_k}(x^k), x\rangle:\;\;x\in\R^n\}.
 \end{equation}
 Denote by $\mathcal{C},$ the set of all limit points of $\{x^k\}$. Then $\mathcal{C}$ is a nonempty compact set. By assumption, for each $z\in \mathcal{C},$ there are $\theta(z)\in (0,1),$   $\kappa(z)>0,$ and $0<\delta(z)<\eta(z)$ such that for all $i=1,...,m,$
\begin{equation}\label{Loj Ina bis}
 | f_i(x)-f_i(z)|^{\theta(z)}\le \kappa(z)\| x^*\|,\;\; \forall x\in B(z,\delta(z)),\;x^*\in \partial^Ff_i(x).
 \end{equation}
 By the compactness of $\mathcal{C},$  there are a finite family of balls $\{B(z^j,\delta(z^j)):\; , z^j\in \mathcal{C},\;j=1,...,N\},$ which is a covering of $\mathcal{C}.$
 Set 
  $$\delta:=\min\{\delta(z^j):\;\;j=1,...,N\};\;\;\gamma:=\min\{\gamma(z^j):\;\;j=1,...,N\};$$
  $$\theta:=\max\{\theta(z^j):\;\;j=1,...,N\};\;\;\kappa:=\max\{\kappa(z^j):\;\;j=1,...,N\};$$
  
Since $\lim_{k\to\infty}f(x^k)=f_\infty=0$ and $\lim_{k\to\infty}\|x^{k+1}-x^k\|=0,$ there is an index $k_0$ such that
\begin{equation}\label{####}
 (0\le) f(x^k)<\gamma/2,\;\; \|x^{k+1}-x^k\|<\delta/2\;\;\textrm{and}\;\; d(x^k,\mathcal{C})<\delta/2\;\;\forall k\ge k_0-1.
 \end{equation}
Let $k\ge k_0$ be given. Pick $z^{j_k}\in \mathcal{C},$ $j_k\in\{1,...,N\}$  such that $\|x^k-z^{j_k} \|<\delta/2.$ Then, $\|x^{k+1}-z^{j_k} \|<\delta.$ Hence,
\begin{equation}\label{@@@@}
 \begin{array}{ll}
 f_{i_k}(z^{i_{k}})&\le g(z^{j_k})-\langle \nabla h_{i_k}(x^k),z^{j_k}-x^k\rangle -h_{i_k}(x^k)\\
 &\le g(z^{j_k})-g(x^{k+1})-\langle \nabla h_{i_k}(x^k),z^{j_k}-x^{k+1}\rangle \\
 &+g(x^{k+1})-\langle \nabla h_{i_k}(x^k),x^{k+1}-x^k\rangle-h_{i_k}(x^k)\\
 &\le g(z^{j_k})-g(x^{k+1})-\langle \nabla h_{i_k}(x^k),z^{j_k}-x^{k+1}\rangle \\
 &+g(x^{k+1}_i)-\langle \nabla h_{i}(x^k),x^{k+1}_i-x^k\rangle-h_{i}(x^k)\;(\forall i\in I(x^k))\\
 &\le g(z^{j_k})-g(x^{k+1})+L\delta/2 +g(x^k)-h_i(x^k)\;(\forall i\in I(x^k)).
 \end{array}
 \end{equation}
 Where, the last two inequalities follow from the definition (\ref{Convex prog}) of $x^{k+1}_i$ and the definition of $x^{k+1}$. Since $\max_{i\in I(x^k)}h_i(x^k)=h(x^k),$ and by the first relation (\ref{@}), one obtains
 \begin{equation}\label{@@@@@}\notag
 \begin{array}{ll}
 f_{i_k}(z^{i_k})&\le g(z^{j_k})-g(x^{k+1})+L\delta/2 +f(x^k)\\
 &= f(z^{j_k})+ [h(z^{j_k})-h(x^{k+1})]-f(x^{k+1})+L\delta/2 +f(x^k)\\
 &\le f(z^{j_k})+\gamma+L\delta/2\le  f(z^{j_k})+\gamma(z^{i_k})+L\eta(z^{i_k}) .
 \end{array}
 \end{equation}
 Therefore in view of the second relation (\ref{@}), one obtains $i_k\in M(z^{j_k}),$ and consequently $f_{i_k}(z^{j_k})=f(z^{j_k})=f_\infty=0.$ 
 \vskip 0.2cm
 \noindent Now let $k\ge k_0+1,$ since $\nabla h_{i_{k-1}}(x^{k-1})\in \partial g(x^k),$
 \begin{equation}
\nabla h_{i_{k-1}}(x^{k-1})-\nabla h_{i_{k-1}}(x^k)\in \partial^F f_{i_{k-1}}(x^k).
 \end{equation}
As $x^k\in B(z^{j_k},\delta),$ in view of (\ref{Loj Ina bis}), one has
\begin{equation}\label{N1}
\begin{array}{ll}
0\le f(x^k)^\theta&\le f_{i_{k-1}}(x^k)^\theta= \left(f_{i_{k-1}}(x^k)-f_{i_{k-1}}(z^{j_{k-1}})\right)^\theta\\
&\le\kappa\|\nabla h_{i_{k-1}}(x^{k-1})-\nabla h_{i_{k-1}}(x^k)\|\le\kappa L\|x^{k-1}-x^k\|. 
\end{array}
 \end{equation}
 Therefore, using the concavity of the function $t\mapsto t^{1-\theta}$ on $(0,+\infty),$ along with (\ref{decreasing inq}), we can deduce that
 \begin{equation}
\begin{array}{ll}
\frac{\rho}{2}\|x^{k+1}-x^k\|^2&\le f(x^k)-f(x^{k+1})\\
&\le (1-\theta)^{-1}(f(x^k))^{\theta}\left( f(x^k)^{1-\theta}-f(x^{k+1})^{1-\theta}\right)\\
&\le (1-\theta)^{-1}\kappa L\|x^{k-1}-x^k\|\left(f(x^k)^{1-\theta}-f(x^{k+1})^{1-\theta})\right).
\end{array}
 \end{equation}
 Then by using the inequality $a^2/b+b/4\ge a,$ for all $a,b>0,$
 \begin{equation}
\begin{array}{ll}
\|x^{k+1}-x^k\|&\le \frac{\|x^{k+1}-x^k\|^2}{\|x^{k-1}-x^k\|} +\frac{\|x^{k-1}-x^k\|}{4}\\
&\le \frac{2^{-1}\kappa L}{\rho(1-\theta)}\left(f(x^k)^{1-\theta}-f(x^{k+1})^{1-\theta}\right)+\frac{\|x^{k-1}-x^k\|}{4}.
\end{array}
 \end{equation}
 By summing the inequalities for $k=k_0+1,k_0+2,...,$ we can derive that
 \begin{equation}\label{N2}
\begin{array}{ll}
\frac{3}{4}\sum_{k=k_0+1}^\infty\|x^{k+1}-x^k\|\le \frac{2^{-1}\kappa L}{\rho(1-\theta)}f(x^{k_0+1})^{1-\theta}+\frac{\|x^{k_0}-x^{k_0+1}\|}{4},
\end{array}
 \end{equation}
 that follows the convergence of  the sequence $\{x^k\}$.\hfill$\Box$
\vskip 0.2cm
 The convergence rate, which depends on the shared \L ojasiewicz exponent $\theta$ of the functions $f\_i$, \$i = 1, 2, \dots, m\$, is given in the following theorem. 
\vskip 0.1cm
\begin{theorem}\label{CR} Suppose that the assumptions of the preceding theorem are fulfilled. Then one has
\begin{itemize}
    \item If $\theta\in (0,1/2]$ then $\|x^k-x^\infty\|=O\left(q^k\right),$ for some $q\in (0,1).$
    \item If $\theta\in (1/2,1)$ then $\|x^k-x^\infty\|=O\left(k^{\frac{\theta-1}{2\theta-1}}\right)$
\end{itemize}
    \end{theorem}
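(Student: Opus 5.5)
We follow the standard Attouch--Bolte route for rates and build it on the estimates already obtained in the proof of Theorem \ref{Loj Conver}. Keep $f_\infty=0$, and set
$$\Delta_k:=\sum_{j\ge k}\|x^{j+1}-x^j\|.$$
Since $x^k\to x^\infty$, we have $\|x^k-x^\infty\|\le \Delta_k$, so it suffices to bound $\Delta_k$. Summing the per-step inequality from the previous proof over $j\ge k$ (rather than $j\ge k_0+1$) gives, for $k\ge k_0+1$,
$$\Delta_k\le \tfrac{4}{3}\,C\,f(x^k)^{1-\theta}+\tfrac13\|x^{k-1}-x^k\|,\qquad C:=\frac{\kappa L}{2\rho(1-\theta)},$$
after absorbing the $\tfrac14$ terms. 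By the \L ojasiewicz estimate (\ref{N1}), $f(x^k)^\theta\le\kappa L\|x^{k-1}-x^k\|$. Hence $f(x^k)^{1-\theta}\le (\kappa L)^{(1-\theta)/\theta}\|x^{k-1}-x^k\|^{(1-\theta)/\theta}$. Since $\|x^{k-1}-x^k\|=\Delta_{k-1}-\Delta_k$, we arrive at the key recursion
$$\Delta_k\le C_1(\Delta_{k-1}-\Delta_k)^{\frac{1-\theta}{\theta}}+C_2(\Delta_{k-1}-\Delta_k).$$

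\emph{Case $\theta\in(0,1/2]$.} Here $(1-\theta)/\theta\ge 1$. Because $\Delta_{k-1}-\Delta_k\to0$, for large $k$ the power term is dominated by a constant times $(\Delta_{k-1}-\Delta_k)$. This gives $\Delta_k\le C_3(\Delta_{k-1}-\Delta_k)$, so $\Delta_k\le \frac{C_3}{1+C_3}\Delta_{k-1}$, and geometric decay follows with $q=C_3/(1+C_3)$.

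\emph{Case $\theta\in(1/2,1)$.} Now $(1-\theta)/\theta<1$, and for large $k$ the power term dominates, giving $\Delta_k^{\theta/(1-\theta)}\le C_4(\Delta_{k-1}-\Delta_k)$. We then use the classical comparison with the function $\phi(s)=s^{(1-2\theta)/(1-\theta)}$, split into two subcases. If $\Delta_k^{\theta/(1-\theta)}\ge \tfrac12\Delta_{k-1}^{\theta/(1-\theta)}$, a mean-value argument on $\phi$ yields $\phi(\Delta_k)-\phi(\Delta_{k-1})\ge\mu>0$. Otherwise $\Delta_k\le c\,\Delta_{k-1}$ with a fixed $c<1$, and again $\phi(\Delta_k)-\phi(\Delta_{k-1})\ge(c^{(1-2\theta)/(1-\theta)}-1)\phi(\Delta_{k-1})\ge\mu'$, since $\phi(\Delta_{k-1})$ is bounded below ($\phi$ blows up as $s\to0$). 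Summing gives $\phi(\Delta_k)\ge \mu'' k$, that is, $\Delta_k\le Ck^{-(1-\theta)/(2\theta-1)}=O\bigl(k^{\frac{\theta-1}{2\theta-1}}\bigr)$.

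The main obstacle is the clean derivation of the recursion for $\Delta_k$. In the previous proof the per-step inequality uses $\|x^{k-1}-x^k\|$ in the denominator, which may vanish. If it does, then $f(x^k)=0$ by (\ref{N1}), so the sequence is eventually constant and both rates hold trivially. One must also check that the indices in (\ref{N1}) (the balls around $z^{j_k}$ and the exponent $\theta$ taken as the common maximal exponent) are valid for all large $k$. The remaining case analysis is routine.
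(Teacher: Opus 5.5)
Your proposal is correct and follows the same route as the paper. From (\ref{N1}) and the per-step inequality behind (\ref{N2}) you derive the tail recursion $\sum_{i\ge k}\|x^{i+1}-x^i\|\le\tau_1\|x^k-x^{k-1}\|+\tau_2\|x^k-x^{k-1}\|^{\frac{1-\theta}{\theta}}$, and then you carry out the Attouch--Bolte case analysis (including the degenerate eventually-stationary case) that the paper only sketches by citing Theorem 3.5 of \cite{ANT-JOTA18}.
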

    \vskip 0.2cm
\noindent\textit{Proof.}  
The proof closely follows that of Theorem 3.5 in \cite{ANT-JOTA18}. We provide only a sketch here. By using relations (\ref{N1}) and (\ref{N2}), we establish the existence of constants $\tau_1, \tau_2 > 0$ and an index $k_0$ such that for all $k \geq k_0$, the following inequality holds:
$$
\sum_{i=k}^\infty \| x^{i+1} - x^i \| \leq \tau_1 \| x^k - x^{k-1} \| + \tau_2 \| x^k - x^{k-1} \|^{\frac{1-\theta}{\theta}}.
$$
The convergence rates stated in the theorem's conclusion are then derived from this inequality, following the same arguments as in the proof of Theorem 3.5 in \cite{ANT-JOTA18}. \hfill$\Box$

\subsubsection{\texorpdfstring{Case 2: $h$ is the pointwise maximum of infinitely many continuously differentiable convex functions}{Case 2: h is thepointwise maximum of infinitely many continuously differentiable convex functions}} 
 Consider $(P)$ when the function $h$ is defined by
 \begin{equation}\label{Continumm}
 h(x)=\max_{t\in T}\varphi(x,t),\;\; x\in\R^n.
 \end{equation}
 Where, $T$ is a compact metric space; $\varphi: \R^n\times T\to\R$ is a continuous function, and for each $t\in T$, $\varphi(\cdot,t):\R^n\to\R$ is a differentiable  convex function with derivative $\nabla_x\varphi:\R^n\times T\to\R^n$ that is continuous with respect to the joint two variables $(x,t)\in\R^n\times T.$  Denote by $h_t:=\varphi(\cdot,t),$ $t\in T$ and $\nabla_x\varphi(x,t)=\nabla h_t(x).$ As before, we use the notations for $x\in\R^n,$ $\delta>0.$
 $$M(x):=\{t\in T:\;\; h_t(x)=h(x)\}\;\;\mbox{and}\;\; M_\delta(x)=\{t\in T:\;\; h_t(x)\ge h(x)-\delta\}$$
 
 Under these given assumptions, the subdifferential and the derivative of $h$ are given by (\cite{IT})
 \begin{equation}\label{Subdiff formula}
 \partial h(x)=\textrm{co}\{\nabla h_t(x):\;\; t\in M(x)\},\; x\in\R^n; 
 \end{equation}
 \begin{equation}\label{DD formula}
 h^\prime(x,v)=\max_{t\in M(x)}\langle \nabla h_t(x),v\rangle,\;\; x\in \R^n,\; v\in\R^n.
 \end{equation}
 Where, the notation $``\textrm{co}"$ stands for the convex hull (of a set) in $\R^n.$ 
 \vskip 0.2cm
 The compactness of $T$ guarantees that for each $\varepsilon>0,$ there exist a finite family of balls with radius $\varepsilon$ in $T$ covering $T.$ So we assume that
 \vskip 0.2cm
 \noindent\textit{(A1): For each $\varepsilon>0$, there is available of a finite family of balls with radius $\varepsilon:$ $\{B_j^\varepsilon:\;\; j\in J_\varepsilon\}$ in $T$ such that $T\subseteq\bigcup_{j\in J_\varepsilon}   B_j^\varepsilon.$}
 \vskip 0.2cm
 With the assumption $(A1),$ a method to update $I(x^k)$ is given as follows.
 \vskip 0.5cm
 \noindent\rule{12.7cm}{0.5pt}\\
 \textbf{Update 2 for $I(x^k):$} The pointwise maximum of an infinite differentiable convex functions  \\
 \noindent\rule{12.7cm}{0.5pt}
  \begin{itemize}
  \item  $k:=0,$ $x^0\in\R^n,$ $I(x^0):=\{t_0\},$ for some $i_0\in M(x^0).$
  \item Pick $\delta>0$; an integer number $k_0,$ and a sequence of positive reals $\{\varepsilon_k\}\rightarrow 0.$ For $k=0,1,..,$ take some $t_{k}\in M(x^k)$ and set a finite family of balls with radius  $\varepsilon_k:$ $\{B_j^{\varepsilon_k}:\;\; j\in J_k\},$ covering $T.$  
  \item Set 
  \begin{equation}\label{J set}
  J^*_k:=\left\{j\in J_k:\;\;B_j^{\varepsilon_k}\bigcap\left(\bigcup_{p=k-k_0-1}^{k-1}I(x^p)\right)=\emptyset \right\}.
  \end{equation}
  \item Pick $t_j\in B_j^{\varepsilon_k}$ for $j\in J^*_k,$ and set 
   \begin{equation}\label{Update set 2 bis}
   I(x^k):=\{t_j:\;\; j\in J^*_k,\; t_j\in M_\delta(x^k)\}\cup\{t_k\}.
   \end{equation}
  \end{itemize}
 \noindent\rule{12.7cm}{0.5pt}
 \vskip 0.2cm 
 \begin{lemma}\label{DS cond update 2@} The sets of differentiable convex functions 
 $$\{h_{t}:\;\;t\in I(x^k)\},\;\; k=0,1,..,$$  generated from Update 2 satisfy the condition (\ref{D stationary cond}) in Theorem \ref{Convergence1} for any limit point of $\{x^k\}$, provided
 $\rho:=\inf_{t\in T}\rho(h_t)>0.$  
 \end{lemma}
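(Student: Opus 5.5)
The plan mirrors the proof of Lemma \ref{DS cond update 1}, with the finite index set replaced by an $\varepsilon_k$-covering of $T$ and the exact index $\bar i$ replaced by parameters that come within $\varepsilon_k$ of a maximizing parameter $\bar t$.

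\emph{Step 1: vanishing steps.} Since $\rho=\inf_{t\in T}\rho(h_t)>0$, Theorem \ref{Convergence1}(a) gives $\rho_k\ge\rho$ for all $k$, hence $\|x^{k+1}-x^k\|\to0$. So if $x^{l_k}\to x^\infty$, then every iterate $x^{l_k-j}$ with $0\le j\le k_0+1$ also converges to $x^\infty$.

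\emph{Step 2: a maximizing parameter.} Fix $x\in\R^n$ and a subsequence $x^{l_k}\to x^\infty$. By (\ref{DD formula}) and compactness of $M(x^\infty)$ (closed in $T$ by continuity of $\varphi$), choose $\bar t\in M(x^\infty)$ with $\langle\nabla h_{\bar t}(x^\infty),x-x^\infty\rangle=h'(x^\infty,x-x^\infty)$. For each $k$, let $j\in J_{l_k}$ index a ball $B_j^{\varepsilon_{l_k}}$ containing $\bar t$. There are two cases.
\begin{itemize}
\item[(a)] $j\in J^*_{l_k}$. Then the chosen $t_j$ satisfies $d(t_j,\bar t)\le2\varepsilon_{l_k}$.
\item[(b)] $j\notin J^*_{l_k}$. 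Then $B_j^{\varepsilon_{l_k}}$ meets $I(x^p)$ for some $p\in\{l_k-k_0-1,\dots,l_k-1\}$, so some $s\in I(x^p)$ has $d(s,\bar t)\le2\varepsilon_{l_k}$.
\end{itemize}
In both cases we obtain $j_k\in\{0,\dots,k_0+1\}$ and $s_k$ with $d(s_k,\bar t)\le2\varepsilon_{l_k}\to0$ and $x^{l_k-j_k}\to x^\infty$. In case (a) we also still need $s_k\in M_\delta(x^{l_k})$, which is handled in Step 3.

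\emph{Step 3: membership in $M_\delta$ (main obstacle).} By joint continuity of $\varphi$ and continuity of $h$ (a max of a continuous function over compact $T$, hence continuous), $h_{s_k}(x^{l_k-j_k})-h(x^{l_k-j_k})\to h_{\bar t}(x^\infty)-h(x^\infty)=0$. Thus $s_k\in M_\delta(x^{l_k-j_k})$ for $k$ large, so in case (a) the point $t_j=s_k$ is indeed in $I(x^{l_k})$. Case (b) is the delicate one: membership of $s_k$ in $I(x^p)$ is given directly, but that set also contains the padding $t_p$. I would argue that in either case $s_k$ belongs to $I(x^{l_k-j_k})$. If the weakened indexing in (\ref{J set}) causes trouble, I would instead only use that $s_k\in I(x^p)$ with $x^p\to x^\infty$, which suffices. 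This is the step that needs care, since the covering balls change with $k$ and the elements found in earlier sets are only $2\varepsilon$-close to $\bar t$, not equal to it.

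\emph{Step 4: conclude.} Along the subsequence $x^{l_k-j_k}$, which converges to $x^\infty$ and is therefore admissible in (\ref{D stationary cond}), write $y^k:=x^{l_k-j_k}$. Then
$$\max_{t\in I(y^k)}\left[\langle\nabla h_t(y^k),x-y^k\rangle+h_t(y^k)\right]\ge\langle\nabla h_{s_k}(y^k),x-y^k\rangle+h_{s_k}(y^k).$$
By joint continuity of $\nabla_x\varphi$ and $\varphi$, the right-hand side tends to $\langle\nabla h_{\bar t}(x^\infty),x-x^\infty\rangle+h_{\bar t}(x^\infty)=h'(x^\infty,x-x^\infty)+h(x^\infty)$, which is exactly condition (\ref{D stationary cond}).
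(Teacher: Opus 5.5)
Your proposal is correct and follows essentially the same route as the paper. You choose $\bar t\in M(x^\infty)$ attaining $h^\prime(x^\infty,x-x^\infty)$, take the covering ball containing $\bar t$, and split on whether its index lies in $J^*_{l_k}$ (so a point of that ball enters $I(x^{l_k})$ once it lies in $M_\delta(x^{l_k})$) or not (so the ball meets some $I(x^p)$ with $l_k-k_0-1\le p\le l_k-1$), then pass to the limit using $\|x^{k+1}-x^k\|\to 0$ and the joint continuity of $\varphi$ and $\nabla_x\varphi$. Your hesitation in Step 3 about case (b) is unnecessary, since $s_k\in I(x^p)$ follows directly from the definition of $J^*_{l_k}$ (whether or not $s_k$ is the padding element $t_p$), and checking $M_\delta$-membership only for the chosen point $t_j$ is just a leaner form of the paper's inclusion $B_{j_k}^{\varepsilon_k}\subseteq M_\delta(x^{l_k})$.
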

 \vskip 0.2cm
 \textit{Proof.} Let $x^\infty$ be a limit point of $\{x^k\}$ and let $\{x^{l_k}\}$ be a subsequence converging to $x^\infty.$ Let  $x\in\R^n$ be given arbitrarily. Since
 $$\max_{t\in M(x^\infty)}\langle \nabla h_i(x^\infty), x-x^\infty\rangle =h^\prime (x^\infty,x-x^\infty),$$
 there is $\bar t\in M(x^\infty)$ such as 
 $$\langle \nabla h_{\bar t}(x^\infty), x-x^\infty\rangle =h^\prime (x^\infty,x-x^\infty).$$
 Since $\{B_{j}^\varepsilon:\;\; j\in J_k\}$ is a covering of $T,$ there is $j_k\in J_k$ such that $\bar t\in B_{j_k}^{\varepsilon_k}.$ Next by the continuity of $\varphi,$ when $k$ is sufficiently large, say $k\ge k_\delta,$ one has
 \begin{equation}\label{ball contain}
 B_{j_k}^{\varepsilon_k}\subseteq M_\delta(x^{l_k}),\;\;\mbox{for all}\; k\ge k_\delta.
 \end{equation}
 Since $\rho:=\inf_{t\in T}\rho(h_t)>0,$ in view of Theorem \ref{Convergence1}, 
$$\lim_{k\to\infty}x^{p_k}=\lim_{k\to\infty}x^{l_k}=x^\infty,$$ 
 for any subsequence $\{x^{p_k}\}$ with $p_k\in \{l_k-k_0-1,...,l_k\}.$ 
  In view of the update (\ref{Update set 2 bis}) for $I(x^k)$, if $j_k\notin J_{l_k}^*,$ then we can find $p_k\in \{l_k-k_0-1,...,l_k-1\}$ such that $B_{j_k}^{\varepsilon_k}\cap I(x^{p_k})\not=\emptyset,$ otherwise by (\ref{ball contain}), $B_{j_k}^{\varepsilon_k}\cap I(x^{l_k})\not=\emptyset.$ That is, for $k\ge k_\delta,$ we can find out $p_k \in\{l_k-k_0-1,...,l_k\}$ and $t_{p_k}\in B_{j_k}^{\varepsilon_k}\cap I(x^{p_k}).$ So $\lim_{k\to\infty} t_{p_k}=\bar t,$ and therefore, by the continuity of $\varphi$ and $\nabla_x\varphi,$ as well,
 $$\begin{array}{ll}&\lim_{k\to\infty}\max_{t\in I(x^{p_k})}\left[\langle \nabla h_t(x^{p_k}),x-x^{p_k}\rangle +h_t(x^{p_k}) \right]\\
&\ge\lim_{k\to\infty}\left[\langle \nabla h_{t_{p_k}}(x^{p_k}),x-x^{p_k}\rangle +h_{t_{p_k}}(x^{p_k}) \right]\\
&=\langle \nabla h_{\bar t}(x^\infty),x-x^\infty\rangle +h(x^\infty)=h^\prime(x^\infty,x-x^\infty) +h(x^\infty).
\end{array}$$ 
\hfill{$\Box$}
\vskip 0.2cm
\noindent\textbf{Remark 4.} In the update of $I(x^k),$ the number of convex programs that need to be solved during each iteration $k$ depends on the number of balls with radius $\varepsilon_k$ required to cover $T.$ Some estimates for these covering numbers of balls over specific compact sets $T$ have been established in the literature. For instance, when $T:=B_R$ is a Euclidean ball in $\R^N$ with a radius $R>0,$ an upper bound for the covering numbers $\mathcal{N}(T,\varepsilon)$ of balls with radius $\varepsilon>0$ is provided by (\cite{CS}):
 \begin{equation}\label{Covering number bound}
 \ln \mathcal{N}(B_R,\varepsilon)\le N\ln\left(\frac{4R}{\varepsilon}\right).
 \end{equation} 
 \subsubsection{Case 3: \texorpdfstring{$h:\R^n\to\R$}{} is a continuous convex function}
 Now, let's consider the scenario where $h:\R^n\to\R$ is a continuous convex function without any specific structure. It's well-known that $h$ can be represented as the supremum of a (possibly infinite) family of affine functions:
 $$h(x)=\sup_{t\in\R^n}\left(\langle t,x\rangle -h^*(t)\right)=\max_{t\in\partial h(x)}\left(\langle t,x\rangle -h^*(t)\right).$$
 For given $\delta>0,$ denote by
 \begin{equation}\label{Tdelta}
 T(x,\delta):=\bigcup_{\|z-x\|\le \delta} \partial h(z).
 \end{equation}
 The upper continuity of the subdifferential operator for a continuous convex function implies that $T(x,\delta)$ is compact for all $x\in\R^n$ and $\delta>0.$ Utilizing these sets $T(x,\delta),$ locally representing $h$ as the pointwise maximum over a compact set of affine functions is possible: for any $\delta>0,$ $x\in\R^n,$ we have
\begin{equation}\label{local represent}
 h(z)= \max_{t\in T(x,\delta)}\{h_t(z):=\varphi(z,t):= \langle t,z\rangle -h^*(t)\},\;\;\mbox{for all}\; z\in B[x,\delta].
 \end{equation}
 
 Note that when $h$ is strongly convex, its conjugate function $h^*$ is continuous throughout the entire space $\R^n,$ thus ensuring the continuity of $\varphi.$ Leveraging the (local) representation (\ref{local represent}), we can adapt the update method used for $I(x^k)$ in Case 2 to this scenario. Employing similar notations as in Case 2, for given $\delta,\varepsilon>0$ and for $x\in\R^n,$ $z\in B[x,\delta],$ we define:
$$M(z) = \{t\in T(x,\delta): h_t(z) = h(z)\};\;\;M_\varepsilon(z) = \{t\in T(x,\delta): h_t(z) \ge h(z) - \varepsilon\}.$$
Clearly, we have $$M(z) = \partial h(z)\;\;\text{and}\;\; M_\varepsilon(z) = \partial_\varepsilon h(z),\;\;\text{for all}\;z\in\R^n.$$

 \vskip 0.5cm
 \noindent\rule{12.7cm}{0.5pt}\\
 \textbf{Update 3 for $I(x^k):$} The general case of continuous convex functions  \\
 \noindent\rule{12.7cm}{0.5pt}
  \begin{itemize}
  \item  $k:=0,$ $x^0\in\R^n,$ $I(x^0):=\{t_0\},$ for some $i_0\in M(x^0).$
  \item Pick $\delta>0$; an integer number $k_0,$ and a sequence of positive reals $\{\varepsilon_k\}\rightarrow 0.$ For $k=0,1,..,$ take some $t_{k}\in M(x^k)$ and set a finite family of balls with radius  $\varepsilon_k:$ $\{B_j^{\varepsilon_k}:\;\; j\in J_k\},$ covering $T(x^k,\delta).$  
  \item Set 
  \begin{equation}\label{J set2}
  J^*_k:=\left\{j\in J_k:\;\;B_j^{\varepsilon_k}\bigcap\left(\bigcup_{p=k-k_0-1}^{k-1}I(x^p)\right)=\emptyset \right\}.
  \end{equation}
  \item Pick $t_j\in B_j^{\varepsilon_k}$ for $j\in J^*_k,$ and set 
   \begin{equation}\label{Update set 2}
   I(x^k):=\{t_j:\;\; j\in J^*_k,\; t_j\in M_\delta(x^k)\}\cup\{t_k\}.
   \end{equation}
  \end{itemize}
 \noindent\rule{12.7cm}{0.5pt}
 \vskip 0.2cm
 Since $h_t$ is not necessarily strongly convex, to ensure convergence in Theorem \ref{Convergence1}, we apply Algorithm 1 for the DC decomposition with a regularization term:
$$f=\left(g+\left(g+\frac{\rho}{2}\|\cdot\|^2\right)\right)-\left(g+\frac{\rho}{2}\|\cdot\|^2\right),\;\;\text{for}\;\rho>0,$$
which leads us to consider, at each iteration $k,$ the finite family of continuously differentiable convex functions $\left\{h_t+\frac{\rho}{2}\|\cdot\|^2:\;\;t\in I(x^k)\right\}$ instead of $\{h_t:\;\;t\in I(x^k)\}.$

Drawing upon a proof similar to that of Lemma \ref{DS cond update 2@}, we can establish:
\begin{lemma}\label{DS cond update 2} For $\rho>0,$ the sets of differentiable convex functions 
$$\left\{h_{t}+\frac{\rho}{2}\|\cdot\|^2:\;\;t\in I(x^k)\right\},\;\; k=0,1,..\;$$
generated by Update 3 satisfy the condition (\ref{D stationary cond}) in Theorem \ref{Convergence1} for any limit point of $\{x^k\}$.   
\end{lemma}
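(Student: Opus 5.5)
The plan is to follow the proof of Lemma \ref{DS cond update 2@} almost verbatim. The two new ingredients are the regularized family $\tilde h_t:=h_t+\frac{\rho}{2}\|\cdot\|^2$ and the fact that the index set $T(x^k,\delta)$ now moves with $k$. Set $\tilde h:=h+\frac{\rho}{2}\|\cdot\|^2$. Every $\tilde h_t$ is $\rho$-strongly convex, so $\inf_t\rho(\tilde h_t)\ge\rho>0$. By Theorem \ref{Convergence1}(a) we then get $\|x^{k+1}-x^k\|\to 0$, hence $x^{p_k}\to x^\infty$ for any choice $p_k\in\{l_k-k_0-1,\dots,l_k\}$. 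The target inequality (\ref{D stationary cond}) for $\tilde h$ reads
$$\limsup_k\max_{t\in I(x^{l_k})}\left[\langle t+\rho x^{l_k},x-x^{l_k}\rangle+\tilde h_t(x^{l_k})\right]\ge \tilde h^\prime(x^\infty,x-x^\infty)+\tilde h(x^\infty).$$
Here $\tilde h^\prime(x^\infty,v)=h^\prime(x^\infty,v)+\rho\langle x^\infty,v\rangle$. The quadratic parts converge trivially, so it suffices to treat the affine pieces $h_t(z)=\langle t,z\rangle-h^*(t)$.

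Fix $x$. Because $h$ is finite and convex, $\partial h(x^\infty)$ is compact and $h^\prime(x^\infty,v)=\max_{t\in\partial h(x^\infty)}\langle t,v\rangle$. We can therefore pick $\bar t\in\partial h(x^\infty)=M(x^\infty)$ attaining $h^\prime(x^\infty,x-x^\infty)$, and note that $h_{\bar t}(x^\infty)=h(x^\infty)$. For large $k$ we have $\|x^{l_k}-x^\infty\|\le\delta$, so $\bar t\in T(x^{l_k},\delta)$, and some ball $B^{\varepsilon_{l_k}}_{j_k}$ of the covering contains $\bar t$. Then:
\begin{itemize}
\item Every $s$ in that ball satisfies $\|s-\bar t\|\le 2\varepsilon_{l_k}\to 0$.
\item We will show $B^{\varepsilon_{l_k}}_{j_k}\cap T(x^{l_k},\delta)\subseteq M_\delta(x^{l_k})$ for large $k$.
\end{itemize}
Granting the second point, the same dichotomy as in Lemma \ref{DS cond update 2@} applies. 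Either $j_k\notin J^*_{l_k}$, and some earlier $I(x^{p_k})$ meets the ball. Or $j_k\in J^*_{l_k}$, and the chosen $t_{j_k}$ lies in $M_\delta(x^{l_k})$, hence in $I(x^{l_k})$. In both cases we obtain $t_{p_k}\in I(x^{p_k})$ with $t_{p_k}\to\bar t$ and $x^{p_k}\to x^\infty$.

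The final limit is
$$\langle t_{p_k},x-x^{p_k}\rangle+h_{t_{p_k}}(x^{p_k})\to\langle\bar t,x-x^\infty\rangle+h_{\bar t}(x^\infty).$$
This requires continuity of $(z,t)\mapsto h_t(z)=\langle t,z\rangle-h^*(t)$, which is the main obstacle. Convergence of $\langle t_{p_k},x^{p_k}\rangle$ is immediate; the delicate term is $h^*(t_{p_k})\to h^*(\bar t)$.
\begin{itemize}
\item Lower semicontinuity of $h^*$ gives $\liminf h^*(t_{p_k})\ge h^*(\bar t)$, which is the wrong direction for a lower bound.
\item I would therefore use the Fenchel equality. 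Since $t_{p_k}\in T(x^{p_k},\delta)$, we have $t_{p_k}\in\partial h(z_k)$ for some $z_k$ in a fixed compact set, and $h^*(t_{p_k})=\langle t_{p_k},z_k\rangle-h(z_k)$.
\item Passing to a subsequence $z_k\to\bar z$, the closedness of the graph of $\partial h$ gives $\bar t\in\partial h(\bar z)$. Hence $h^*(t_{p_k})\to\langle\bar t,\bar z\rangle-h(\bar z)=h^*(\bar t)$, so $h^*$ is continuous along such sequences.
\item Alternatively one can invoke the paper's remark that $h^*$ is continuous when $h$ is strongly convex. I would state this as the standing interpretation if the argument above needs it.
\end{itemize}
The same continuity argument, applied to points of the ball, yields the second bullet point, i.e.\ the inclusion in $M_\delta(x^{l_k})$.

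Combining, the $\limsup$ on the left of (\ref{D stationary cond}), evaluated along $\{x^{p_k}\}$, is at least
$$\langle\bar t,x-x^\infty\rangle+h(x^\infty)+\rho\langle x^\infty,x-x^\infty\rangle+\frac{\rho}{2}\|x^\infty\|^2=\tilde h^\prime(x^\infty,x-x^\infty)+\tilde h(x^\infty),$$
as required.
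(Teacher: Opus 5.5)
Your proposal is correct and takes essentially the paper's route: the paper only says to rerun the proof of Lemma~\ref{DS cond update 2@}, and you do this with the regularized family $h_t+\frac{\rho}{2}\|\cdot\|^2$ and with coverings of the moving set $T(x^{l_k},\delta)$. Your Fenchel-equality and closed-graph argument, which gives $h^*(t_{p_k})\to h^*(\bar t)$ whenever $t_{p_k}\to\bar t$ with $t_{p_k}\in\partial h(z_k)$ and $\{z_k\}$ bounded, supplies the continuity of $\varphi(z,t)=\langle t,z\rangle-h^*(t)$ along the relevant sequences; the paper justifies that continuity only when $h$ is strongly convex, so your version does not need that assumption.
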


\section{The randomized method for finding a directional stationary point}
 
Regarding Algorithm 1, at each iteration $k = 0, 1, \ldots$, one must solve $|I(x^k)|$ convex optimization subproblems, where $|I(x^k)|$ is the cardinality of the index set $I(x^k)$. When $|I(x^k)|$ is large, this becomes computationally expensive. To mitigate this, we propose a stochastic variant in which only two subproblems are solved per iteration - one chosen at random and one deterministically. The proposed randomized scheme is presented below.
 
\vskip 0.5cm
\noindent\rule{12.7cm}{1.5pt}\\
{\bf Algorithm 2:} Randomized algorithm for finding a directional stationary point \\ 
\noindent\rule{12.7cm}{1pt}\\
\texttt{Initialization:} Initial data: $x^0\in\dom (\partial h),$ and set $k=0.$ 
\vskip 0.2cm
\texttt{Repeat:} For $k=0,1,...,$ 
\begin{itemize}
	\item[1.] Pick a finite set of differentiable convex functions $\{h_i(x): i\in I(x^k)\}$, where $I(x^k)$ is a finite index set, such that 
	\begin{equation}\label{Diff convex minorant2}
	\max_{i\in I(x^k)} h_i(x^k)=h(x^k),\;\;\mbox{and}\;\;\max_{i\in I(x^k)} h_i(x)\le h(x)\;\forall x\in\R^n.
	\end{equation}
	\item [2.1] Pick $j_k\in  I(x^k) $ such that $h_{j_k}(x^k)=h(x^k),$ and compute a solution $x^{k+1}_{j_k}$ of the convex program   
	\begin{equation}\label{Convex prog 1b}
	\min\left\{g(x)-\langle \nabla h_{j_k}(x^k),x\rangle:\;\; x\in\R^n\right\}.
	\end{equation} 
	\item [2.2] (\textbf{Randomized step}) Randomly select, independently from the previous iterations,  $\xi_k\in I(x^k),$ and compute a solution $x^{k+1}_{\xi_k}$ of the convex program
	\begin{equation}\label{Convex prog 2b}
	\min\left\{g(x)-\langle \nabla h_{\xi_k}(x^k),x\rangle:\;\; x\in\R^n\right\}.
	\end{equation} 
	\item [3.] Set 
	\begin{equation}\label{next step 2b}
	x^{k+1}:=\textrm{argmin}\{g(x^{k+1}_i)-\langle \nabla h_i(x^k),x^{k+1}_i-x^k\rangle- h_i(x^k):\;\; i=j_k,\xi_k\}.
	\end{equation}  
	\item[4.] Set $k:=k+1.$\\
	\texttt{Until} Stopping criterion.
\end{itemize}
\noindent\rule{12.7cm}{1.5pt}
\vskip 0.5cm
We now show  that any limit point of (P) is, \textit{almost surely},  a directional stationary point. In particular,   this recovers the corresponding result  in \cite{PRA}. 

\begin{theorem}\label{Convergence 2} Let $\{x^k\}$ be a sequence generated by Algorithm 2. Denote by $\{p_k^i:\;\;i\in I(x^k)\},$ the probability distribution of $\xi_k$ over $I(x^k).$ 
Suppose that
	\begin{itemize}
		\item[(i).] $\alpha=\inf\{f(x)=g(x)-h(x):\;\;x\in\R^n\}>-\infty.$
		\item[(ii).] Almost surely, for any limit point $x^\infty$ of $\{x^k\}$ at which $h$ is continuous, and  for every $x\in\R^n,$ there exists a subsequence $\{x^{l_k}\}$ converging to $x^\infty,$ one has 
		\begin{equation}\label{D stationary cond-bis}
		\begin{array}{ll}
		\limsup_{k\to\infty}\max_{i\in I(x^{l_k})}&\left[\langle\nabla h_i(x^{l_k}),x-x^{l_k}\rangle +h_i(x^{l_k})\right]\\
		& \ge\langle h^\prime(x^\infty),x-x^\infty\rangle +h(x^\infty).
		\end{array} 
		\end{equation}
		Then almost surely, the sequence of functional values $\{f(x^k)\}$ is a decreasing sequence and any limit point of  $\{x^k\}$ is a directional stationary point of $(P).$ Moreover, if in addition, $\rho>0$ and $1>\sup_{k\in\N}p_k\ge \inf_{k\in\N} p_k>0,$ then $\lim_{k\to\infty}\Vert x^{k+1}-x^k\Vert=0,$ almost surely.
	\end{itemize}
\end{theorem}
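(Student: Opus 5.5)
The plan is to follow the proof of Theorem~\ref{Convergence1}, separating the pathwise (deterministic) part from the probabilistic part.

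\textbf{Step 1: monotonicity holds surely.} Since $j_k\in I(x^k)$ satisfies $h_{j_k}(x^k)=h(x^k)$, the argument of Theorem~\ref{Convergence1}(a) applies with $I(x^k)$ replaced by the two-element set $\{j_k,\xi_k\}$. Using the $\rho(h_{i})$-convexity estimate (\ref{estim 2}) for the selected index, the optimality of $x^{k+1}_{j_k}$ in (\ref{Convex prog 1b}) tested at $x=x^k$, and the rule (\ref{next step 2b}), we get
$$f(x^{k+1})+\frac{\rho_k}{2}\|x^{k+1}-x^k\|^2\le g(x^k)-h_{j_k}(x^k)=f(x^k)$$
on every sample path. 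So $\{f(x^k)\}$ is decreasing and converges to some $f_\infty\ge\alpha>-\infty$. If $\rho>0$, summing gives $\sum_k\|x^{k+1}-x^k\|^2<\infty$, hence $\|x^{k+1}-x^k\|\to0$. This holds surely, so in particular almost surely. The bounds on $p_k$ are only needed to make the following steps nondegenerate.

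\textbf{Step 2: pathwise inequality for the sampled index.} As in part (c) of Theorem~\ref{Convergence1}, applying (\ref{estim 1}) to $i=\xi_k$ gives, for every $k$ and every $x\in\R^n$,
$$f(x^\infty)\le f(x^{k+1})\le g(x)-\Big[\langle\nabla h_{\xi_k}(x^k),x-x^k\rangle+h_{\xi_k}(x^k)\Big].$$
Note that only the random index $\xi_k$ appears here, not the maximum over all of $I(x^k)$. To conclude as in Theorem~\ref{Convergence1}(c), I therefore need: almost surely, for every limit point $x^\infty$ and every $x$, there are infinitely many $k$ in a subsequence $x^{l_k}\to x^\infty$ for which $\xi_{l_k}$ attains, up to an $\varepsilon$, the maximum
$$\Phi_k(x):=\max_{i\in I(x^k)}\big[\langle\nabla h_i(x^k),x-x^k\rangle+h_i(x^k)\big].$$
Granted this, letting $k\to\infty$ and using (\ref{D stationary cond-bis}) gives $h'(x^\infty,x-x^\infty)\le g(x)-g(x^\infty)$ for all $x$, which is directional stationarity exactly as before.

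\textbf{Step 3 (the main obstacle): a conditional Borel--Cantelli argument.} Let $\mathcal F_k$ be the $\sigma$-field generated by $\xi_0,\dots,\xi_{k-1}$. Then $x^k$, $I(x^k)$ and the maximizing index $i_k(x)$ of $\Phi_k(x)$ are $\mathcal F_k$-measurable, and
$$P\big(\xi_k=i_k(x)\mid\mathcal F_k\big)=p_k^{i_k(x)}\ge\inf_k p_k>0.$$
The difficulty is that the subsequence $\{l_k\}$ and the limit point $x^\infty$ are random, and the statement must hold simultaneously for all $x$. I will handle this by countable reduction:
\begin{itemize}
\item[1.] Fix $x$ in a countable dense set $D\subset\R^n$, a rational ball $U$, and a rational $\varepsilon>0$.
\item[2.] Consider the $\mathcal F_k$-measurable events $A_k=\{x^k\in U\}$ and the events $E_k=A_k\cap\{\xi_k=i_k(x)\}$.
\item[3.] Lévy's extension of the Borel--Cantelli lemma gives, almost surely, $\sum_k\mathbf 1_{E_k}=\infty$ on $\{\sum_k\mathbf 1_{A_k}=\infty\}$. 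This is because $\sum_k P(E_k\mid\mathcal F_k)\ge(\inf_k p_k)\sum_k\mathbf 1_{A_k}$.
\item[4.] Intersect over the countably many triples $(x,U,\varepsilon)$ to obtain a single full-measure event.
\end{itemize}
On this event, for every limit point $x^\infty$ and every shrinking sequence of rational balls around it, there are infinitely many $k$ with $x^k$ near $x^\infty$ and $\xi_k$ maximizing $\Phi_k(x)$. A diagonal argument then produces the required subsequence for each $x\in D$.

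\textbf{Step 4: from $D$ to all of $\R^n$.} The map $x\mapsto g(x)-h'(x^\infty,x-x^\infty)-h(x^\infty)$ is handled through the convex function $v\mapsto g'(x^\infty,v)-h'(x^\infty,v)$. The inequality on $D$ extends to all of $\R^n$ using the continuity of $h$ at $x^\infty$ (so that $h'(x^\infty,\cdot)$ is finite and Lipschitz) together with lower semicontinuity. Alternatively, one can avoid density by noting that $\Phi_k$ is a maximum of finitely many affine pieces whose gradients are bounded near $x^\infty$, as in (\ref{Nabla bounded}). I expect this measurability and diagonal bookkeeping in Step 3, rather than the descent estimates, to be where care is required.
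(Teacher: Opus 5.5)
Your Borel--Cantelli route differs from the paper's and is the right idea, but Step~3 as written does not deliver what Step~2 needs. Hypothesis (ii) supplies only \emph{one} subsequence $\{x^{l_k}\}$ along which $\limsup_k\Phi_{l_k}(x)\ge h'(x^\infty,x-x^\infty)+h(x^\infty)$. Your events $A_k=\{x^k\in U\}$ only yield infinitely many $k$ with $x^k$ near $x^\infty$ and $\xi_k=i_k(x)$. Nothing forces these $k$ to be indices at which $\Phi_k(x)$ is large, so the diagonal subsequence you build cannot be combined with (ii). This is not a technicality. Under Update~1, the index $\bar i$ realizing $h'(x^\infty,x-x^\infty)$ is excluded from $I(x^k)$ for up to $k_0+1$ iterations after each appearance, and at those iterations $\Phi_k(x)$ may stay strictly below the target.

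The repair is to condition on the relevant $\mathcal{F}_k$-measurable quantity rather than on the position of $x^k$: take $A_k=\{\Phi_k(x)>c\}$ with $c\in\mathbb{Q}$. The ball $U$ is unnecessary, because $f(x^\infty)\le f_\infty\le f(x^{k+1})$ for every $k$. This follows from lower semicontinuity of $f$ at $x^\infty$ (where $h$ is continuous) plus monotonicity. L\'evy's lemma then gives, almost surely, that $\Phi_k(x)>c$ infinitely often implies $\Phi_k(x)>c$ and $\xi_k=i_k(x)$ infinitely often. At any such $k$ your Step~2 inequality reads $f(x^\infty)\le g(x)-\Phi_k(x)<g(x)-c$, and by (ii) this is available for every rational $c<h'(x^\infty,x-x^\infty)+h(x^\infty)$.

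Step~4 also fails as justified. Lower semicontinuity of $g$ points the wrong way for extending an inequality $g(x)\ge\cdots$ from a dense set, and $v\mapsto g'(x^\infty,v)-h'(x^\infty,v)$ is not convex. If $\dom g$ has empty interior (e.g.\ $g$ contains the indicator of a lower-dimensional constraint set), a countable dense $D$ can miss $\dom g$ entirely, making the inequality on $D$ vacuous. Instead, take $D$ dense in the deterministic set $\operatorname{ri}(\dom g)$. Then use three facts: $g$ is continuous relative to $\operatorname{ri}(\dom g)$; $g(x)=\lim_{t\downarrow 0}g((1-t)x+ty)$ for $y\in\operatorname{ri}(\dom g)$; and $h'(x^\infty,\cdot)$ is continuous. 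Your bounded-gradient alternative would need (\ref{Nabla bounded}), which is not assumed.

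For comparison, the paper gets $f_\infty\le g(x)-\Phi_{l_k}(x)$ for each fixed $l_k$ from $f_\infty\le\mathbb{E}W$. It treats $f_\infty$ as a constant, although $f_\infty$ depends on $\xi_{l_k}$ and all later draws. That per-index bound can fail with positive probability, so an infinitely-often argument like yours is what is really needed.

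Similarly, invoking $\inf_k p_k>0$ for the main conclusion is necessary, even though the statement imposes it only for the last assertion. Take $g(x)=x^2+\max\{0,x\}$, $h_1(x)=x+\frac{x^2}{2}$, $h_2(x)=-x+\frac{x^2}{2}$, $h=\max\{h_1,h_2\}$, $x^0=0$, $I(x^k)=\{1,2\}$, $j_k=1$ and $p_k^2=2^{-k-2}$. With positive probability $\xi_k=1$ for all $k$, so $x^k\equiv 0$. This point is DC-critical but not directionally stationary, although (i) and (ii) hold.
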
\vskip 0.2cm
\noindent\textit{Proof.} By the definition of $x_{j_k}^{k+1}$, $x_{\xi_k}^{k+1}$ and $x^{k+1},$ 
if $x^{k+1}=x_{j_k}^{k+1},$ then 
\begin{equation}\notag
\begin{array}{ll}
f(x^{k+1})&\le g(x^{k+1})-h_{j_k}(x^{k+1})\\
&\le g(x^{k+1})-\left[\langle\nabla h_{j_k}(x^k), x^{k+1}-x^k\rangle +h_{j_k}(x^k)+\frac{\rho}{2}\|x^{k+1}-x^k\|^2\right]\\
&\le g(x^k)-h_{j_k}(x^k)-\frac{\rho}{2}\|x^{k+1}-x^k\|^2=f(x^k)-\frac{\rho}{2}\|x^{k+1}-x^k\|^2,
\end{array}
\end{equation}
Otherwise, $x^{k+1}=x_{\xi_k}^{k+1},$ one has
\begin{equation}\notag
\begin{array}{ll}
f(x^{k+1})&\le g(x^{k+1})-h_{\xi_k}(x^{k+1})\\
&\le g(x^{k+1})-\left[\langle\nabla h_{\xi_k}(x^k), x^{k+1}-x^k\rangle +h_{\xi_k}(x^k)+\frac{\rho}{2}\|x^{k+1}-x^k\|^2\right]\\
& \le g(x_{j_k}^{k+1})-\left[\langle\nabla h_{j_k}(x^k), x_{j_k}^{k+1}-x^k\rangle +h_{\xi_k}(x^k)\right]-\frac{\rho}{2}\|x^{k+1}-x^k\|^2\\
&\le g(x^k)-h_{j_k}(x^k)-\frac{\rho}{2}\|x^{k+1}-x^k\|^2=f(x^k)-\frac{\rho}{2}\|x^{k+1}-x^k\|^2,\end{array}
\end{equation}
Therefore, one always has
\begin{equation}\notag
f(x^{k+1})\le f(x^k)-\frac{\rho}{2}\|x^{k+1}-x^k\|^2,\;\;\forall k\in\N.
\end{equation}
Thus
\begin{equation}\label{!!!}
\begin{array}{ll}
\mathbb{E}\left[f(x^{k+1})|x^0,x^1,\cdots x^k\right]&\le f(x^k)-\frac{\rho}{2}\mathbb{E}\left[\|x^{k+1}-x^k\|^2|x^0,x^1,\cdots x^k\right]
\end{array}
\end{equation}
By the supermartingale convergence theorem, $\{f(x^k)\}$ is decreasing and converges almost surely, say, to $f_\infty\ge\alpha,$ and if $\rho>0,$ then
 $$\sum_{k=1}^\infty\mathbb{E}\left[\|x^{k+1}-x^k\|^2\vert x^0,x^1,\cdots x^k\right]<+\infty.
$$
The last relation implies $\lim_{k\to\infty}\Vert x^{k+1}-x^k\Vert=0,$ almost surely. 
\vskip 0.2cm
On the other hand, for any subsequence $\{x^{l_k}\},$ for all $x\in\R^n,$ one has (almost surely), if $x^{l_k+1}=x_{\xi_k}^{l_k+1},$ then
\begin{equation}\notag
\begin{array}{ll} f_\infty\le f(x^{l_k+1})&\le  g(x^{l_k+1}) -h_{\xi_k}(x^{l_k+1}) \\
&\le  g(x_{\xi_{l_k}}^{l_k+1})-\left[\langle \nabla h_{\xi_{l_k}}(x^{l_k}), x_{\xi_{l_k}}^{l_k+1}-x^k\rangle +h_{\xi_{l_k}}(x^{l_k})\right]\\
&\le g(x)-\left[\langle\nabla h_{\xi_{l_k}}(x^{l_k}), x-x^{l_k}\rangle +h_{\xi_{l_k}}(x^{l_k})\right],\end{array}
\end{equation}
otherwise, $x^{l_k+1}=x_{j_k}^{l_k+1},$ then
\begin{equation}\notag
\begin{array}{ll} f_\infty\le f(x^{l_k+1})&\le  g(x^{l_k+1}) -h_{j_k}(x^{l_k+1}) \\
&\le  g(x_{j_{l_k}}^{l_k+1})-\left[\langle\nabla h_{j_{l_k}}(x^{l_k}), x_{j_{l_k}}^{l_k+1}-x^k\rangle +h_{j_{l_k}}(x^{l_k})\right]\\
&\le  g(x_{\xi_{l_k}}^{l_k+1})-\left[\langle\nabla h_{\xi_{l_k}}(x^{l_k}), x_{\xi_{l_k}}^{l_k+1}-x^k\rangle +h_{\xi_{l_k}}(x^{l_k})\right]\\
&\le g(x)-\left[\langle\nabla h_{\xi_{l_k}}(x^{l_k}), x-x^{l_k}\rangle +h_{\xi_{l_k}}(x^{l_k})\right].\end{array}
\end{equation}
That is, in the two cases, one always has
\begin{equation}\label{eqn bis}
f_\infty\le g(x)-\left[\langle\nabla h_{\xi_{l_k}}(x^{l_k}), x-x^{l_k}\rangle +h_{\xi_{l_k}}(x^{l_k})\right]. 
\end{equation}
For each fixed index $l_k,$ given $x,$ denote 
$$J_k:=\{j\in I(x^{l_k}):\;\;j=\text{argmax}_{i\in I(x^{l_k})}\left[\langle\nabla h_i(x^{l_k}), x-x^{l_k}\rangle +h_{i}(x^{l_k})\right]\},$$
and set $p_k=\mathbb{P}(\xi_{l_k}\in J_k)>0.$ Consider the random variable 
$$W=g(x)-\left[\langle\nabla h_{\xi_{l_k}}(x^{l_k}), x-x^{l_k}\rangle +h_{\xi_{l_k}}(x^{l_k})\right],\,\text{if}\,\, \xi_k\in J_k,\,,\text{otherwise},\ f_\infty.$$
Relation (\ref{eqn bis}) yields immediately
$$f_\infty\le \mathbb{E}W= (1-p_k)f_\infty+ p_kg(x)-p_k\max_{i\in I(x^{l_k})}\left[\langle\nabla h_i(x^{l_k}), x-x^{l_k}\rangle +h_{i}(x^{l_k})\right],$$
or equivalently,
\begin{equation}
\label{bound}
f_\infty\le 
g(x)-\max_{i\in I(x^{l_k})}\left[\langle\nabla h_i(x^{l_k}), x-x^{l_k}\rangle +h_{i}(x^{l_k})\right].
\end{equation}
Let us now consider a realization $\{x^k\}$ of the set of probability one realizations for which $\lim_{k\to\infty}f(x^k)=f_\infty$ and relation (\ref{D stationary cond-bis}) in condition (ii) holds. 
For $x^\infty$ being a limit point of $\{x^k\}$ at which $h$ is continuous, for given $x\in\R^n,$ pick a  subsequence $\{x^{l_k}\}$ converging to $x^\infty,$ verifying (\ref{D stationary cond-bis}). Then $f(x^\infty)=f_\infty,$ by letting $k\to\infty$ in relation (\ref{bound}), in view of (\ref{D stationary cond-bis}), one derive that
$$f(x^\infty)=g(x^\infty)-h(x^\infty)\le g(x)-h^\prime (x^\infty, x-x^\infty)-h(x^\infty),$$
or
\begin{equation}\notag
g(x^\infty)\le 
g(x)-h^\prime (x^\infty, x-x^\infty),\;\forall x\in\R^n,
\end{equation}
which follows that $x^\infty$ is a directional stationary point of (P).\hfill{$\Box$}
\vskip 0.5cm
\noindent \textbf{Remark 5.} Consider again problem (P) in which the function $h$ is of the forms (\ref{Finite maximum}) and (\ref{Continumm}) in the previous section. For the case (\ref{Finite maximum}), obviously, when $I(x^k)$ is the whole set of indices $\{1,2,\cdots,m\}$ or the approximation active set $M_\delta(x^k)$ for $\delta>0$, then assumption (ii) of Theorem \ref{Convergence 2} is well satisfied. For Updates 1 and 2 of $I(x^k),$ in view of Lemmas \ref{DS cond update 1} and \ref{DS cond update 2}, this assumption is satisfied provided the minimum of strongly convex constants of all component functions $h_i's$, denote by $\rho$ is positive. 
\vskip 0.5cm
We consider problem (P) in which the function $h$ is of the form (\ref {Continumm}):
\begin{equation}\notag
h(x)=\max_{t\in T} h_t(x):=\varphi(x,t),\;\; x\in\R^n,
\end{equation}
with the assumptions and the notations as in Subsection 3.3.3. Suppose that the metric compact space $T$ is endowed with a Borel probability mesure $\mathbb{P}$ such that the probability of any balls in $T$ is positive. Next we present a randomized algorithm along with the propability distibution $\mathbb{P}$ for finding a directional stationary point of (P) when $h$ is given by (\ref {Continumm}).
\vskip 0.5cm
\noindent\rule{12.7cm}{1.5pt}\\
{\bf Algorithm 3:} Randomized algorithm for $h$ of the form (\ref {Continumm})\\ 
\noindent\rule{12.7cm}{1pt}\\
\texttt{Initialization:} Initial data: $x^0\in\dom (\partial h),$ and set $k=0.$ 
\vskip 0.2cm
\texttt{Repeat:} For $k=0,1,...,$ 
\begin{itemize}
	\item [1.] Pick $t_k\in M(x^k) $  and compute a solution $x^{k+1}_{t_k}$ of the convex program   
	\begin{equation}\label{Convex prog 1}
	\min\left\{g(x)-\langle \nabla h_{t_k}(x^k),x\rangle:\;\; x\in\R^n\right\}.
	\end{equation} 
	\item [2.] (\textbf{Randomized step}) Randomly select  $\xi_k\in T$ with $\xi_k\simeq \mathbb{P}$ independently from the previous iterations, and compute a solution $x^{k+1}_{\xi_k}$ of the convex program
	\begin{equation}\label{Convex prog 2}
	\min\left\{g(x)-\langle \nabla h_{\xi_k}(x^k),x\rangle:\;\; x\in\R^n\right\}.
	\end{equation} 
	\item [3.] Set
	\begin{equation}\label{next step 2}
	x^{k+1}:=\textrm{argmin}\{g(x^{k+1}_i)-\langle \nabla h_t(x^k),x^{k+1}_t-x^k\rangle- h_t(x^k):\;\; t=t_k,\xi_k\}.
	\end{equation}  
	\item[4.] Set $k:=k+1.$\\
	\texttt{Until} Stopping criterion.
\end{itemize}
\noindent\rule{12.7cm}{1.5pt}
\vskip 0.5cm
The almost surely convergence to a directional stationary point is shown in the following theorem.
\begin{theorem}\label{Convergence 3} Consider (P) with the function $h$ of the form (\ref {Continumm}). Let $\{x^k\}$ be a sequence generated by Algorithm 3. With the stated assumptions in Subsection 3.3.3. Assume that almost surely, the sequence $\{x^k\}$ is bounded.
	Then $\{f(x^k)\}$ is a decreasing sequence and almost surely any limit point of  $\{x^k\}$ is  a directional stationary point of $(P).$
\end{theorem}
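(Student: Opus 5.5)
Plan. The argument has three parts: a deterministic descent estimate, a pathwise optimality inequality, and an almost-sure passage to the limit in which the randomized index $\xi_k$ near a maximizing $\bar t$ plays the role of Lemma \ref{DS cond update 2@}.

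\emph{Descent.} The decrease of $\{f(x^k)\}$ is proved exactly as in Theorem \ref{Convergence 2}. Since $t_k\in M(x^k)$, we have $h_{t_k}(x^k)=h(x^k)$. Whichever of $x^{k+1}_{t_k}$ and $x^{k+1}_{\xi_k}$ is chosen in (\ref{next step 2}), the convexity of $h_t$ and $h_t\le h$ give
$$f(x^{k+1})\le g(x^{k+1})-\langle\nabla h_{t}(x^k),x^{k+1}-x^k\rangle-h_t(x^k)\le g(x^{k}_{})-h_{t_k}(x^k)=f(x^k),$$
where the last step compares the chosen value with the value at $t=t_k$ and then uses optimality of $x^{k+1}_{t_k}$ in (\ref{Convex prog 1}) tested at $x=x^k$. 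This holds for every realization, so $f(x^k)\downarrow f_\infty$. Boundedness of $\{x^k\}$ and lower semicontinuity give $f_\infty\ge f(x^\infty)>-\infty$ along limit points. The same chain, using optimality of $x^{k+1}_{\xi_k}$ tested at an arbitrary $x$, yields for every $k$ and every $x\in\R^n$
$$f_\infty\le g(x)-\left[\langle\nabla h_{\xi_k}(x^k),x-x^k\rangle+h_{\xi_k}(x^k)\right].\qquad(\ast)$$

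\emph{Almost-sure limit.} I would not try to use a conditional-expectation trick. Instead I would argue pathwise using the independence of the $\xi_k$ and the positivity of ball probabilities. Fix a countable dense set $D\subset T$ and rational radii $r$. For each ball $B(s,r)$ with $s\in D$, the events $\{\xi_k\in B(s,r)\}$ are independent with constant probability $\mathbb{P}(B(s,r))>0$. By the second Borel--Cantelli lemma, almost surely, for every such ball and every subsequence index set that is \emph{deterministic}, infinitely many hits occur. The difficulty is that the subsequence $\{l_k\}$ converging to $x^\infty$ is random and depends on the $\xi$'s. To handle this I would use that $x^k$ is $\sigma(\xi_0,\dots,\xi_{k-1})$-measurable while $\xi_k$ is independent of it. For a fixed ball $B$ of radius $r$ and a fixed ball $U$ in $\R^n$ with rational center and radius, consider the events $A_k=\{x^k\in U\}$. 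By Lévy's extension of Borel--Cantelli, almost surely on $\{A_k \text{ i.o.}\}$ we have $\{A_k\cap\{\xi_k\in B\}\}$ i.o., since $\sum_k \mathbb{P}(A_k\cap\{\xi_k\in B\}\mid\mathcal F_k)=\mathbb{P}(B)\sum_k 1_{A_k}=\infty$. Intersecting over the countably many pairs $(U,B)$ gives a probability-one set $\Omega_0$.

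\emph{Conclusion.} On $\Omega_0$, let $x^\infty$ be a limit point and fix $x\in\R^n$. By (\ref{DD formula}), pick $\bar t\in M(x^\infty)$ with $\langle\nabla h_{\bar t}(x^\infty),x-x^\infty\rangle=h'(x^\infty,x-x^\infty)$. For each $m$, take rational balls $U_m\ni x^\infty$ of radius $<1/m$ and $B_m\ni\bar t$ of radius $<1/m$, with centers in $D$. Since $x^k\in U_m$ infinitely often, there is an index $k_m$, which can be chosen increasing in $m$, with $x^{k_m}\in U_m$ and $\xi_{k_m}\in B_m$. Then $x^{k_m}\to x^\infty$ and $\xi_{k_m}\to\bar t$. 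Passing to the limit in $(\ast)$ along $k_m$, using the joint continuity of $\varphi$ and $\nabla_x\varphi$ and the identity $h_{\bar t}(x^\infty)=h(x^\infty)$, gives
$$f(x^\infty)\le f_\infty\le g(x)-h'(x^\infty,x-x^\infty)-h(x^\infty).$$
Hence $h'(x^\infty,v)\le g'(x^\infty,v)$ for all $v$, exactly as at the end of the proof of Theorem \ref{Convergence1}(c), so $x^\infty$ is directionally stationary. Note that this argument does not need $\|x^{k+1}-x^k\|\to0$, and hence no strong convexity of the $h_t$.

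The main obstacle is the measurability step: the random subsequence must be replaced by the countable family of deterministic events $(U,B)$, combined with the conditional Borel--Cantelli lemma. Boundedness of $\{x^k\}$ is used only to guarantee that limit points exist and that the relevant sets are compact.
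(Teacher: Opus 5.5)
Your proof is correct, but it reaches the almost-sure conclusion by a different route than the paper. Both arguments share the descent estimate and the pathwise inequality $(\ast)$ (the paper's (\ref{@+})). From there the paper proceeds as follows:
\begin{itemize}
\item it fixes $x$ and $\delta$, and lets $t_k$ maximize $\langle\nabla h_t(x^k),x-x^k\rangle+h_t(x^k)$ over the $\delta$-active set $M(x^k,\delta)$;
\item it chooses continuity radii $\delta_k$ and builds a random variable $W_k\ge f_\infty$, equal to the $\varepsilon$-perturbed bound at $t_k$ when $\xi_k\in B(t_k,\delta_k)$ and to $f_\infty$ otherwise;
\item it takes expectations and divides by $p_k=\mathbb{P}(B(t_k,\delta_k))>0$ to obtain (\ref{bound bis}) at \emph{every} iterate, then passes to the limit using $M(x^\infty)\subseteq M(x^k,\delta)$.
\end{itemize}
That expectation step treats the random limit $f_\infty$ (not known at time $k$) and $x^k$ as constants. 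So the per-iterate bound it yields is not justified as a pathwise statement; it is really a heuristic for ``$\xi_k$ lands near the maximizer with positive probability''.

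Your argument is the rigorous version of that idea. You apply L\'evy's conditional Borel--Cantelli lemma to the countably many pairs consisting of a rational ball $U\subset\R^n$ and a ball $B\subset T$ with centre in a countable dense set and rational radius. This shows that, almost surely, $\xi_k\in B$ at infinitely many of the times $x^k\in U$. That is exactly what is needed, because the subsequence converging to $x^\infty$ is itself random. It also lets you use $(\ast)$ only along a sparse subsequence with $x^{k_m}\to x^\infty$ and $\xi_{k_m}\to\bar t$, with no $\delta$-active sets or moduli $\delta_k$.

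The paper's version is shorter and mirrors the template of Theorem \ref{Convergence1}(c). Yours needs explicit filtration bookkeeping: $x^k$ must be $\sigma(\xi_0,\dots,\xi_{k-1})$-measurable, which tacitly requires measurable selections of $t_k$ and of the subproblem solutions (the paper assumes this implicitly too). In exchange, yours is the one that actually delivers the almost-sure claim.

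A minor wording point: your aside about deterministic subsequences holds for each fixed deterministic subsequence separately, not simultaneously for all of them. Since you never use it, nothing in the proof is affected.
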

\vskip 0.2cm
\noindent \textit{Proof.} Similarly to the proof of Theorem \ref{Convergence 2}, the sequence of functional values $\{f(x^k)\}$ is decreasing and converges almost surely to $f_\infty$. 
Then, as in the argument to obtain (\ref{eqn bis}) in the proof of the preceding theorem, one has
\begin{equation}\label{@+}
f_\infty\le g(x)-\left[\langle\nabla h_{\xi_{l_k}}(x^{k}), x-x^{k}\rangle +h_{\xi_{k}}(x^{k})\right],\;\;\forall k\in \N,\,\forall x\in\R^n.
\end{equation}
Given $x\in\R^n,$ given $\delta>0,$ for each $k\in\N,$ pick 
$$t_k:=t_k(x)=\text{argmax}_{t\in M(x^k,\delta)}\left[\langle\nabla h_t(x^{k}), x-x^{k}\rangle +h_{t}(x^{k})\right]$$
Pick any $\varepsilon>0,$ since by assumption, both $\varphi,$ $\nabla_x \varphi(\cdot,\cdot)$ are continuous in the joint variable $(x,t),$ we can find $ \delta_k>0,$ 
\begin{equation}\label{UC}
|h_t(x^k)-h_{t_k}(x^k)|<\varepsilon,\;\; \|\nabla h_t(x^k)-\nabla h_{t_k}(x^k)\|<\varepsilon,\;\forall k\in\N,\; \forall t\in B(t_k,\delta_k).
\end{equation}
For each $k\in\N,$ consider the random variable:
\[
W_k =
\left\{
\begin{array}{ll}
g(x) - \big[\langle \nabla h_{t_k}(x^{k}), x-x^k\rangle + h_{t_k}(x^k)\big]
+ \varepsilon \|x-x^k\| + \varepsilon,
& \text{if } \xi_k \in B(t_k,\delta_k),\\
f_\infty, & \text{otherwise.}
\end{array}
\right.
\]
Then relations (\ref{@}) and (\ref{UC}) yield $f_\infty\le W_k,$
so for $p_k=\mathbb{P}\left(B(t_k,\delta_k\right)>0,$
\begin{align*}
f_\infty \le \mathbb{E}W_k
&= (1-p_k)f_\infty
+ p_k g(x)
- p_k \max_{t\in M(x^k,\delta)}
\left[\langle \nabla h_t(x^{k}), x-x^{k}\rangle + h_{t}(x^{k})\right] \\
&\quad + p_k\left(\varepsilon\|x-x^k\|+\varepsilon\right).
\end{align*}
or equivalently,
\begin{equation}
\label{bound bis}
f_\infty\le 
g(x)-\max_{t\in M(x^k,\delta)}\left[\langle\nabla h_t(x^{k}), x-x^{k}\rangle +h_{t}(x^{k})\right]+ \varepsilon\|x-x^k\|+\varepsilon\;\forall x\in\R^n.
\end{equation}
Now consider a realization $\{x^k\}$ of the set of probability one realizations for which $\lim_{k\to\infty}f(x^k)=f_\infty.$ 
For $x^\infty$ being a limit point of $\{x^k\}.$ Without loss of generality, assume the whole sequence $\{x^k\}$  converging to $x^\infty.$ Then $f(x^\infty)=f_\infty,$ by letting $k\to\infty$ in relation (\ref{bound bis}), as $M(x^\infty)\subseteq M(x^k,\delta)$ when $k$ is sufficiently large, one arrive at
$$f(x^\infty)=g(x^\infty)-h(x^\infty)\le g(x)-h^\prime (x^\infty, x-x^\infty)-h(x^\infty)+\varepsilon\|x-x^\infty\|+\varepsilon,$$
as $\varepsilon>0$ arbitrary, one has
\begin{equation}\notag
g(x^\infty)\le 
g(x)-h^\prime (x^\infty, x-x^\infty),\;\forall x\in\R^n,
\end{equation}
showing that $x^\infty$ is a directional stationary point of (P).\hfill{$\Box$}
\section{Conclusion}

This paper makes a significant contribution to DC programming by introducing a powerful, unified framework for 
computing directional stationary points in general (smooth / non-smooth) DC programs, supported by rigorous convergence 
guarantees. By leveraging a tailored update strategy, our 
method addresses the general non-smooth DC setting - substantially extending and improving upon the particular 
case considered by Pang et al. \cite{PRA} - while also 
reducing the number of convex programs that must be solved
at each iteration.
The framework is especially effective when the second DC 
component is the pointwise maximum of a finite or compact 
family of smooth convex functions, but it also extends 
naturally to general DC programs.

A randomized variant further enhances scalability by significantly lowering the computational burden in large-scale settings, while retaining rigorous convergence guarantees. Together, these advances establish a practical and theoretically sound foundation for tackling challenging non-smooth DC problems.

Future research will explore broader problem settings and applications where the proposed algorithm can be effectively deployed, while validating its practical impact through large-scale, diverse numerical experiments - paving the way for its adoption in real-world optimization challenges.

\end{document}